\documentclass[sn-mathphys-num]{sn-jnl}

\usepackage{graphicx}%
\usepackage{multirow}%
\usepackage{amsmath,amssymb,amsfonts}%
\usepackage{amsthm}%
\usepackage{mathrsfs}%
\usepackage[title]{appendix}%
\usepackage{xcolor}%
\usepackage{textcomp}%
\usepackage{manyfoot}%
\usepackage{booktabs}%
\usepackage{algorithm}%
\usepackage{algorithmicx}%
\usepackage{algpseudocode}%
\usepackage{listings}%
\usepackage{mathtools}
\usepackage[shortlabels]{enumitem}
\usepackage{dsfont}
\numberwithin{equation}{section}

\newcommand*{\abs}[1]{\left\lvert#1\right\rvert}
\newcommand*{\set}[1]{\left\{#1\right\}}
\newcommand{\real}{\mathbb R}
\newcommand{\ex}{\mathbb E}
\newcommand{\prob}{\mathbb P}
\newcommand{\ND}{\mathcal N}
\newcommand{\borel}{\mathcal B}
\newcommand{\ind}{\mathds 1}
\DeclareMathOperator{\const}{const}
\DeclareMathOperator{\Cov}{Cov}
\DeclareMathOperator{\Var}{Var}
\DeclareMathOperator{\sgn}{sgn}
\newcommand{\BB}{A}
\newcommand{\CC}{B}
\newcommand{\DD}{D}

\allowdisplaybreaks
\newtheorem{theorem}{Theorem}[section]
\newtheorem{proposition}[theorem]{Proposition}%
\newtheorem{lemma}[theorem]{Lemma}

\theoremstyle{remark}%
\newtheorem{example}[theorem]{Example}%
\newtheorem{remark}[theorem]{Remark}%

\raggedbottom

\begin{document}

\title[Discretization of integrals w.r.t.\ multifractional Brownian motions]{Discretization of integrals driven by~multifractional Brownian motions with~discontinuous integrands}


\author[1,2]{\fnm{Kostiantyn} \sur{Ralchenko}}\email{kostiantyn.ralchenko@uwasa.fi}

\author[1]{\fnm{Foad} \sur{Shokrollahi}}\email{foad.shokrollahi@uwasa.fi}

\author[1]{\fnm{Tommi} \sur{Sottinen}}\email{tommi.sottinen@uwasa.fi}

\affil[1]{\orgdiv{School of Technology and Innovations}, \orgname{University of Vaasa}, \orgaddress{\street{P.O.~Box~700}, \city{Vaasa}, \postcode{FIN-65101}, \country{Finland}}}

\affil[2]{\orgdiv{Department of Probability, Statistics and Actuarial Mathematics}, \orgname{Taras Shevchenko National University of Kyiv}, \orgaddress{\street{Volodymyrska St.}, \city{Kyiv}, \postcode{01601}, \country{Ukraine}}}


\abstract{We establish the rate of convergence in the $L^1$-norm for equidistant approximations of stochastic integrals with discontinuous integrands driven by multifractional Brownian motion. Our findings extend the known results for the case when the driver is a fractional Brownian motion.}

\keywords{Approximation of stochastic integral, discontinuous integrands, rate of convergence, multifractional Brownian motions}


\pacs[MSC Classification]{60G15, 60G22, 62F12, 62M09}

\maketitle

\section{Introduction}
We consider equidistant approximations of stochastic integrals driven by multifractional Brownian motion with discontinuous integrands. Specifically, we establish the rate of convergence for equidistant approximations of pathwise stochastic integrals:
\[
\int_0^1 \Psi'(X_s) dX_s \approx \sum_{k=1}^n \Psi'\left(X_{t_{k-1}}\right) \left(X_{t_k} - X_{t_{k-1}}\right),
\]
where $t_k = \frac{k}{n}$, $k = 0, 1,\dots, n$. Here, $\Psi$ represents a difference of convex functions, and $X$ denotes a multifractional Brownian motion (see Section~\ref{sec:mfbm} for details). The integral is interpreted as a pathwise Stieltjes integral, following the integration theory for discontinuous integrands developed in \cite{CLV19} using a modification of Z\"ahle's fractional integration theory \cite{Zaehle1, Zaehle2}.

Recently, a similar problem was addressed in \cite{Tommi24} for the case when the driving process $X$ is centered, Gaussian and H\"older continuous  of order $H>\frac12$. Additionally, in \cite{Tommi24},  $X$ satisfies the following conditions: its variance function $V(t)$ is non-decreasing on $[0,1]$, $V(1) = 1$, and its variogram function is represented as
\[
\ex(X_t - X_s )^2 = \sigma^2 \abs{t - s}^{2H} + o\left(\abs{t - s}^{2H}\right), \quad\text{as } |t - s|\to0.
\]
Examples of such processes include fractional, bifractional and sub-fractional Brownian motions, the fractional Ornstein--Uhlenbeck process, and normalized multi-mixed fractional Brownian motion, among others. In \cite{Tommi24}, the exact rate of convergence for approximations of stochastic integrals in the 
$L^1$-distance is found to be proportional to 
$n^{1-2H}$, which corresponds to the known rate in the case of smooth integrands (see \cite{Garino22} and references therein).
Notably, for the case of fractional Brownian motion, this problem was studied earlier in \cite{AV2015}. For other related studies on stochastic integrals with discontinuous integrands, see also \cite{Hinz22, Hinz23, SV16, Yaskov19}.

\looseness=-1 In this paper, we focus on approximating integrals driven by multifractional Brownian motion. This process generalizes fractional Brownian motion by allowing the Hurst index to vary over time. Such a generalization enables the modeling of stochastic processes whose path regularity and ``memory depth'' evolve over time. In this case, the variance function of the process is 
$V(t) = t^{2H_t}$, which is generally non-monotone. Consequently, the direct application of results from \cite{Tommi24} is infeasible, as the proofs there rely on the monotonicity of 
$V(t)$. However, by exploiting the specific form of the variance function, we can address these challenges and establish a rate of convergence proportional to 
$n^{1-2H}$ with $H = \min\{\min_t H_t,\alpha\}$, where $\alpha$ is a H\"older exponent of 
$H_t$. To achieve this, we adapt the general proof scheme from \cite{Tommi24}, but significantly modify and generalize the auxiliary results to accommodate a process with non-monotone variance.

The paper is organized as follows. In Section~\ref{sec:mfbm}, we review various definitions of multifractional Brownian motion and outline its properties necessary for the subsequent sections. Section \ref{sec:main} presents the statement of our main result. The proofs are provided in Section~\ref{sec:proofs}.

\section{Multifractional Brownian motion: Definition and examples}
\label{sec:mfbm}

Let $H\colon[0,1]\to(\frac12,1)$ be a continuous function satisfying the following assumptions:
\begin{enumerate}[({A}1)]
\item\label{A1}
$H_{\min} \coloneqq \min\limits_{t\in[0,1]} H_t > \frac12$
and
$H_{\max} \coloneqq \max\limits_{t\in[0,1]} H_t < 1$.
\item\label{A2}
There exist constants $C>0$ and $\alpha\in(\frac12,1]$ such that for all $t, s \in [0,1]$
\[
\abs{H_t-H_s}\le C\abs{t-s}^\alpha.
\]
\end{enumerate}

There exist several generalizations of fractional Brownian motion to the case where the Hurst index $H$ is varying with time.

\begin{example}[Moving-average multifractional Brownian motion \cite{PeltierLevyVehel}]
\label{ex:moving}
Multifractional Brownian motion was first introduced by Peltier and L\'evy V\'ehel \cite{PeltierLevyVehel}. Their definition is
based on the Mandelbrot--van Ness representation for fractional Brownian motion (see, for example, [6, Chapter 1.3]).
The \emph{moving-average multifractional Brownian motion} is
defined by
\begin{equation}\label{eq:fbm-MvN}
X_t= C_1\left(H_t\right) \int_{-\infty}^t\!\left[(t-s)_+^{H_t-\frac12}-(-s)_+^{H_t-\frac12}\right] dW_s,
\end{equation}
where $W = \{W_t, t \in \real\}$ is a two-sided Wiener process,
$x_+ = \max\{x, 0\}$,
and
\[
C_1(H) = \left(\frac{2H\Gamma\left(\frac32 - H\right)}{\Gamma\left(H + \frac12\right) \Gamma(2 - 2H)}\right)^{1/2}
= \frac{\left(2H \Gamma(2H) \sin(\pi H)\right)^{1/2}}{\Gamma\left(H+\frac12\right)}.
\]
\end{example}

\begin{example}[Multifractional Volterra-type Brownian motion \cite{Boufoussi10,mBm10}]
\label{ex:volterra}
The next definition of a multifractional Brownian motion is based on the integral representation of the fractional Brownian motion through a Brownian motion on a finite interval developed in \cite{NVV99}.
The \emph{multifractional Volterra-type Brownian motion} is the process
\begin{equation}\label{eq:fbm-volterra}
X_t = \int_0^t K_{H_t}(t,s)\,dW_s,
\end{equation}
where $W = \set{W_t, t\ge0}$ is a Wiener process, and $K_H(t,s)$ is the Molchan kernel defined by
\[
K_H(t,s) =
    C_2(H) s^{\frac12-H} \int_s^t(v-s)^{H-\frac32}v^{H-\frac12}\,dv,
\quad H\in(\tfrac12,1),  
\]
with 
$C_2(H) = C_1(H) (H-\frac12)$.
\end{example}

\begin{example}[Harmonizable multifractional Brownian motion \cite{Benassi97,Cohen99}]
\label{ex:harmonizable}
Consider another generalization, introduced in \cite{Benassi97}. Let $W(\cdot)$ be a complex random measure on $\real$ such that
\begin{enumerate}[1)]
\item for all $A,B\in\borel(\real)$,
\[
\ex W(A)\overline{W(B)}=\lambda(A\cap B),
\]
where $\lambda$ is the Lebesgue measure;
\item for an arbitrary sequence
$\set{A_1, A_2,\dots}\subset\borel(\real)$
such that
$A_i\cap A_j=\emptyset$ for all $i \ne j$, we have
\[
W\Biggl(\bigcup_{i\ge1}A_i\Biggr) = \sum_{i\ge1}W(A_i),
\]
(here $\set{W(A_i),i\ge1}$ are centered normal random variables);
\item for all $A\in\borel(\real)$,
    $$
    W(A)=\overline{W(-A)},
    $$
\item for all $\theta\in\real$,
    $$
    \set{e^{i\theta}W(A),A\in\borel(\real)}
    \overset{d}{=}\set{W(A),A\in\borel(\real)}.
    $$
\end{enumerate}
The \emph{harmonizable multifractional Brownian motion} is defined by 
\begin{equation}\label{eq:fbm-harmon}
X_t = C_3(H_t)\int_{\real}\frac{e^{itx}-1}{\abs{x}^{\frac12+H_t}}\,W(dx),
\end{equation}
where
$C_3(H) = (H\Gamma(2H)\sin(\pi H)/\pi)^{1/2}$.
\end{example}

In the sequel, we consider a generalization of the fractional Brownian motion
defined by $X_t=B_t^{H_t}$, $t\in [0,1]$, where
$\set{B^H_t,t\in[0,1],H\in\left(\frac12,1\right)}$
is a family of random variables such that
\begin{enumerate}[(B1)]
\item\label{B1}
for a fixed $H\in\left(\frac12,1\right)$, the process $\set{B^H_t,t\in[0,1]}$ is a fractional Brownian motion with the Hurst parameter $H$;
\item\label{B2}
for all $t\in[0,1]$ and all $H_1,H_2\in [H_{\min},H_{\max}]$,
\begin{equation}\label{eq:bound-fbm}
\ex\left(B_t^{H_1}-B_t^{H_2}\right)^2
\le C (H_1-H_2)^2,
\end{equation}
where
$C$ is a constant that may depend on $H_{\min}$ and $H_{\max}$.
\end{enumerate}

The above conditions are satisfied, for instance, by every one of the generalizations described in Examples~\ref{ex:moving}--\ref{ex:harmonizable}, since conditions \ref{B1} and \ref{B2} hold for representations \eqref{eq:fbm-MvN}--\eqref{eq:fbm-harmon}, see \cite{Cohen99,PeltierLevyVehel,mBm10}.
In particular, the bound \eqref{eq:bound-fbm} for the Mandelbrot--van Ness representation \eqref{eq:fbm-MvN} was established in \cite[proof of Thm.~4]{PeltierLevyVehel}, for the Volterra representation \eqref{eq:fbm-volterra} it was proved in \cite[Eqs.\ (16)--(17)]{mBm10}, and for the harmonizable representation \eqref{eq:fbm-harmon} it can be found in  \cite[proof of Lemma 3.1]{Dozzi18}.

For further reference, we collect necessary properties of the variance and variogram functions of multifractional Brownian motion in the following lemma.
\begin{lemma}\label{l:mfBm}
The multifractional Brownian motion $X = \set{X_t,t\in[0,1]}$ has the following properties.
\begin{enumerate}[(i)]
\item For all $t\in[0,1]$
\[
V(t)\coloneqq\ex X_t^2 = t^{2H_t}.
\]

\item For all $t,s\in[0,1]$
\[
\vartheta(t,s)\coloneqq\ex \left(X_t - X_s\right)^2
\le \abs{t-s}^{2H_{\min}} + C\abs{t-s}^{H_{\min} + \alpha} + C\abs{t-s}^{2\alpha}.
\]
\end{enumerate}
\end{lemma}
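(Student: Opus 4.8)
The plan is to obtain both statements directly from the defining properties \ref{B1}--\ref{B2} of the family $\set{B^H_t}$, combined with the H\"older bound \ref{A2} on $t\mapsto H_t$. Part (i) is immediate: since $X_t = B_t^{H_t}$ and, for the frozen value $H=H_t$, the process $\set{B^{H_t}_s, s\in[0,1]}$ is by \ref{B1} a fractional Brownian motion with Hurst index $H_t$, one gets $V(t) = \ex X_t^2 = \ex\bigl(B^{H_t}_t\bigr)^2 = t^{2H_t}$.

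For part (ii), I would insert the auxiliary random variable $B^{H_t}_s$ and split
\[
X_t - X_s = B^{H_t}_t - B^{H_s}_s = \bigl(B^{H_t}_t - B^{H_t}_s\bigr) + \bigl(B^{H_t}_s - B^{H_s}_s\bigr),
\]
and then apply Minkowski's inequality in $L^2$ to obtain
\[
\sqrt{\vartheta(t,s)} \le \Bigl(\ex\bigl(B^{H_t}_t - B^{H_t}_s\bigr)^2\Bigr)^{1/2} + \Bigl(\ex\bigl(B^{H_t}_s - B^{H_s}_s\bigr)^2\Bigr)^{1/2}.
\]
The first term equals $\abs{t-s}^{H_t}$ by \ref{B1}; since $\abs{t-s}\le1$ and $H_t\ge H_{\min}$, it is bounded by $\abs{t-s}^{H_{\min}}$. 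For the second term, \ref{B2} is applicable because $H_t,H_s\in[H_{\min},H_{\max}]$, which gives $\ex\bigl(B^{H_t}_s - B^{H_s}_s\bigr)^2 \le C\,(H_t-H_s)^2$, and \ref{A2} then yields $(H_t-H_s)^2 \le C^2\abs{t-s}^{2\alpha}$. Putting these together gives $\sqrt{\vartheta(t,s)} \le \abs{t-s}^{H_{\min}} + C'\abs{t-s}^{\alpha}$ for a suitable constant $C'$, and squaring this produces exactly the three terms $\abs{t-s}^{2H_{\min}}$, a cross term proportional to $\abs{t-s}^{H_{\min}+\alpha}$, and a term proportional to $\abs{t-s}^{2\alpha}$, which is the asserted inequality after renaming the constant.

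I do not expect a genuine obstacle in this lemma; it is essentially a bookkeeping exercise built on the triangle inequality. The points that need attention are: using Minkowski's inequality rather than expanding the square, so that no covariance estimate is needed for the cross term; checking that the intermediate Hurst value $H_t$ (equivalently $H_s$) indeed lies in $[H_{\min},H_{\max}]$, which is what makes \ref{B2} available; and exploiting $\abs{t-s}\le1$ to trade the exponent $2H_t$ for the smaller value $2H_{\min}$ (hence the larger bound). The same argument goes through unchanged if one inserts $B^{H_s}_t$ instead of $B^{H_t}_s$.
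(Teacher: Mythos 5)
Your proposal is correct and follows essentially the same route as the paper: the same insertion of the auxiliary term $B^{H_t}_s$, with \ref{B1} for the increment in $t$, \ref{B2} combined with \ref{A2} for the increment in $H$, and a cross term of order $\abs{t-s}^{H_{\min}+\alpha}$. Using Minkowski's inequality and then squaring yields exactly the same bound as the paper's expansion of the square followed by Cauchy--Schwarz on the cross term, so the difference is purely cosmetic.
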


\begin{proof}
According to the assumption \ref{B2}, if $H_t=H=\const$, then the process $X_t = B^{H_t}_t$ is a fractional Brownian motion. 
This implies the statement $(i)$ and the following bound
\begin{equation}\label{eq:A1-bound}
\ex \left( B^{H_t}_t - B^{H_t}_s \right)^2 
= \abs{t-s}^{2H_t} \le  \abs{t-s}^{2H_{\min}}.
\end{equation}
Moreover, the assumptions \ref{B2} and \ref{A2} yield
\begin{equation}\label{eq:A2-bound}
\ex \left( B^{H_t}_s - B^{H_s}_s \right)^2 
\le C \left(H_t - H_s\right)^{2} \le  C\abs{t-s}^{2\alpha}.
\end{equation}
Furthermore, by the Cauchy--Schwarz inequality we derive from \eqref{eq:A1-bound} and \eqref{eq:A2-bound} that
\[
\ex\abs{\left( B^{H_t}_t - B^{H_t}_s \right) \left( B^{H_t}_s - B^{H_s}_s \right)}
\le C \abs{t-s}^{H_{\min} + \alpha}
\]
Thus,
\begin{align*}
\ex \left(X_t - X_s\right)^2
& = \ex \left( B^{H_t}_t - B^{H_t}_s \right)^2  
+ \ex \left( B^{H_t}_s - B^{H_s}_s \right)^2
\\
&\quad+ 2 \ex\left[ \left( B^{H_t}_t - B^{H_t}_s \right) \left( B^{H_t}_s - B^{H_s}_s \right)\right]
\\
&\le \abs{t-s}^{2H_{\min}} + C\abs{t-s}^{2\alpha} + C \abs{t-s}^{H_{\min} + \alpha},
\end{align*}
and the claim $(ii)$ is proved.
\end{proof}

\begin{remark}
For a convex function $\Psi$, let $\Psi'$ denote its one sided derivative.
In condition \ref{A1} we assume that the function $H_t$ is bounded away from one. This guarantees that
\[
\int_0^1 \frac{1}{\sqrt{V(s)}}\,ds \le \int_0^1 s^{-H_{\max}}\,ds < \infty.
\]
Then by \cite{CLV19} 
$\int_0^1 \Psi'(X_s) dX_s$
exists as a pathwise Riemann--Stieltjes integral; moreover, it satisfies the following chain rule:
\begin{equation}\label{eq:chain}
\int_0^1 \Psi'(X_s) dX_s = \Psi\left(X_1\right) - \Psi\left(X_0\right).
\end{equation}
\end{remark}

\section{Main result}
\label{sec:main}

Let $t_k = \frac{k}{n}$, $k = 0,1,\dots,n$, be an equidistant partition of the interval $[0,1]$.
Throughout the article we use the notation
\begin{equation}\label{eq:phi}
\varphi(a) \coloneqq \ex\left[Y \ind_{Y>a}\right]
= \frac{1}{\sqrt{2\pi}} e^{-\frac{a^2}{2}},
\quad a\in \real.
\end{equation}
In what follows let $C$ denote a generic constant that may change its value from one occurrence to another.

The following theorem is the main result of the paper.

\begin{theorem}\label{th:main}
Let $X_t = B^{H_t}_t$ be a multifractional Brownian motion with the Hurst function $H_t$ satisfying \ref{A1}--\ref{A2}.
Let $\Psi$ be a convex function with the left-sided derivative $\Psi'$, and let $\mu$ denote the measure associated with the second derivative of $\Psi$ such that 
$\int_\real \varphi(a) \mu(da) < \infty$.
Then for any $\widetilde H \in (\frac12, H_{\min}] \cap (\frac12, \alpha)$,
\begin{multline}\label{eq:main}
\ex \abs{\int_0^1 \Psi'(X_s) dX_s - \sum_{k=1}^n \Psi'\left(X_{t_{k-1}}\right) \left(X_{t_k} - X_{t_{k-1}}\right)}
\\*
\le \int_\real \int_0^1 s^{-H_s} \varphi\left(\frac{a}{s^{H_s}}\right) ds\,\mu(da) \left(\frac1n\right)^{2\widetilde H - 1}
+ \int_\real R_n(a) \mu(da),
\end{multline}
where the remainder satisfies
\begin{equation}\label{eq:remain}
 \int_\real R_n(a) \mu(da) \le C  n^{-\min\{2\widetilde H - H_{\max}, H_{\min}+\alpha - 1, 2\alpha-1\}}.
\end{equation}
\end{theorem}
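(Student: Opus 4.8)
The strategy follows the scheme of \cite{Tommi24}, but the non-monotone variance $V(t)=t^{2H_t}$ forces several auxiliary estimates to be redone. First I would use the occupation/time-change identity for convex $\Psi$: writing $\Psi''=\mu$ as a measure, one has the representation
\[
\Psi'(x) - \Psi'(y) = \int_\real \bigl(\ind_{x>a} - \ind_{y>a}\bigr)\,\mu(da) + (\text{linear terms that telescope}),
\]
so that by \eqref{eq:chain} the left-hand side of \eqref{eq:main} reduces to
\[
\ex\Biggl|\sum_{k=1}^n \int_\real \bigl(\ind_{X_{t_{k-1}}>a} - \ind_{X_s>a}\bigr)\,(X_{t_k}-X_{t_{k-1}})\,\mu(da)\Biggr|
\]
after inserting the chain rule and Fubini (the integrability $\int \varphi\,d\mu<\infty$ is exactly what licenses Fubini). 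Pulling $\mu(da)$ outside via the triangle inequality, the whole problem localizes to controlling, for each fixed level $a$, the one-dimensional quantity
\[
G_n(a) \coloneqq \sum_{k=1}^n \ex\Bigl[\bigl(\ind_{X_{t_{k-1}}>a}-\ind_{X_{t_k}>a}\bigr)(X_{t_k}-X_{t_{k-1}})\Bigr]
\]
together with cross terms; this is where $\varphi$ enters, since for a mean-zero Gaussian pair the expectation $\ex[(\ind_{U>a}-\ind_{V>a})(V-U)]$ is computed explicitly in terms of $\varphi(a/\sqrt{\Var U})$, $\varphi(a/\sqrt{\Var V})$ and the covariance.

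Next I would isolate the \emph{main term}. Evaluating $G_n(a)$ on each subinterval and performing a first-order Taylor/mean-value expansion of $s\mapsto s^{H_s}$ and of $\varphi(a/s^{H_s})$ across $[t_{k-1},t_k]$ produces, to leading order, a Riemann sum for $\int_0^1 \tfrac{d}{ds}\bigl[s^{H_s}\varphi(a/s^{H_s})\bigr]\,ds$ scaled by the increment size; the increment variance is $\approx |t_k-t_{k-1}|^{2\widetilde H} = n^{-2\widetilde H}$ by Lemma~\ref{l:mfBm}(ii) (here $\widetilde H\le H_{\min}$ and $\widetilde H<\alpha$ guarantee all three terms in the variogram bound are $O(n^{-2\widetilde H})$, with the $n^{-2H_{\min}}$ term dominant). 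Summing $n$ increments each of order $n^{-2\widetilde H}\cdot n^{-1}\cdot(\text{finite difference})$ and telescoping gives the displayed leading constant $\int_\real\int_0^1 s^{-H_s}\varphi(a/s^{H_s})\,ds\,\mu(da)$ times $n^{1-2\widetilde H}$; note $s^{-H_s}\varphi(a/s^{H_s})$ is integrable near $s=0$ because $H_s\le H_{\max}<1$ and $\varphi\le 1/\sqrt{2\pi}$, which is the role of assumption \ref{A1}.

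Everything not captured by the leading Riemann sum is collected into $R_n(a)$, and the bound \eqref{eq:remain} comes from estimating three distinct error sources: (a) the replacement of the exact Gaussian covariance structure by the leading-order variogram, which costs the \emph{lower-order} terms $C|t-s|^{H_{\min}+\alpha}+C|t-s|^{2\alpha}$ from Lemma~\ref{l:mfBm}(ii), contributing $n^{-(H_{\min}+\alpha-1)}$ and $n^{-(2\alpha-1)}$ after summing; (b) the second-order remainder in the Taylor expansion of $s^{H_s}\varphi(a/s^{H_s})$, which is controlled using \ref{A2} to bound derivatives of $H_s$ and yields a term of order $n^{-(2\widetilde H-H_{\max})}$ (the $H_{\max}$ appearing because the weight $s^{-H_{\max}}$ must be integrated against the squared increment near the origin); and (c) the cross-interval covariance terms $\ex[(\ind_{X_{t_{j-1}}>a}-\ind_{X_{t_j}>a})(X_{t_k}-X_{t_{k-1}})]$ for $j\ne k$, which are handled by the same kind of estimate as in \cite{Tommi24} using the stationarity-like bound on $\vartheta$ and summability of the resulting series. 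Finally one re-integrates all three contributions against $\mu(da)$, using $\int\varphi\,d\mu<\infty$ once more, to obtain \eqref{eq:remain} with the stated exponent $\min\{2\widetilde H-H_{\max},\,H_{\min}+\alpha-1,\,2\alpha-1\}$.

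The main obstacle, as flagged in the introduction, is step (b) together with the uniformity in $a$: because $V(t)=t^{2H_t}$ is not monotone, one cannot simply reparametrize by the variance as in \cite{Tommi24}, so the Taylor expansion of $t\mapsto t^{H_t}$ must be carried out directly, keeping explicit track of how $\log t$ and the H\"older modulus of $H_t$ interact near $t=0$; the estimates must then be shown to hold with constants independent of the level $a$ so that the final integration against $\mu$ goes through with only the hypothesis $\int\varphi\,d\mu<\infty$.
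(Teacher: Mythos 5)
Your overall scheme (reduce to level sets via $\mu$, compute Gaussian expectations through $\varphi$, extract a Riemann sum as the leading term, feed the variogram bounds of Lemma~\ref{l:mfBm} into the remainder) is the right one, but there is a genuine gap at the very first step and it propagates. The correct reduction is not the identity you wrote: by Lemma~4.1 of \cite{Tommi24} the error equals $2\int_\real Z_n^+(a)\,\mu(da)$ with
\[
Z_n^+(a)=\sum_{k=1}^n\left[\left(X_{t_k}-a\right)^+-\left(X_{t_{k-1}}-a\right)^+-\ind_{X_{t_{k-1}}>a}\left(X_{t_k}-X_{t_{k-1}}\right)\right],
\]
and the decisive point --- which your plan never states --- is that each summand is \emph{pathwise nonnegative} (convexity of $x\mapsto(x-a)^+$). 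This is what turns $\ex\abs{\cdot}$ into an exact sum of expectations, each depending only on the bivariate Gaussian law of $(X_{t_{k-1}},X_{t_k})$: the $k$-th summand equals $(X_{t_k}-a)(\ind_{X_{t_k}>a}-\ind_{X_{t_{k-1}}>a})$, and the regression $X_{t_k}=\gamma_kX_{t_{k-1}}+b_kY_k$ gives closed-form expressions in $\varphi$ and $\prob(Y>\cdot)$. Consequently there are \emph{no} cross-interval covariance terms to control; your item (c) is an artifact of having lost the positivity, and bounding indicator covariances across distinct intervals for this non-stationary process would be a far harder, and unnecessary, task. Moreover your per-level quantity $G_n(a)$ is not the right object: the correct summand carries the factor $X_{t_k}-a$, not $X_{t_k}-X_{t_{k-1}}$, so its expectation would not reproduce the stated leading constant.

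Your identification of the leading term is also off: it does not arise as a Riemann sum for $\int_0^1\tfrac{d}{ds}\bigl[s^{H_s}\varphi(a/s^{H_s})\bigr]\,ds$ (that would telescope to a boundary term; the telescoping pieces are precisely the ones shown to be of lower order in Lemma~\ref{l:I23}). It comes from $\sum_{k}\vartheta(t_k,t_{k-1})\,(2t_{k-1}^{H_{t_{k-1}}})^{-1}\varphi(a/t_{k-1}^{H_{t_{k-1}}})$ after expanding $t_k^{H_{t_k}}-\gamma_k t_{k-1}^{H_{t_{k-1}}}$ via the covariance identity: here $\vartheta(t_k,t_{k-1})\le n^{-2\widetilde H}+Cn^{-\min\{H_{\min}+\alpha,2\alpha\}}$ and $\frac1n\sum_k t_{k-1}^{-H_{t_{k-1}}}\varphi(\cdot)$ approximates $\int_0^1 s^{-H_s}\varphi(a/s^{H_s})\,ds$ up to an error $C\varphi(a)n^{H_{\max}-1}$ coming from the singularity at $s=0$ (Lemma~\ref{l:I1B}); multiplying that error by $n^{1-2\widetilde H}$ is what produces the exponent $2\widetilde H-H_{\max}$ in \eqref{eq:remain}, not a second-order Taylor remainder. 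Your attribution of the exponents $H_{\min}+\alpha-1$ and $2\alpha-1$ to the lower-order variogram terms is essentially correct. To repair the argument you need to (i) invoke the positivity lemma, (ii) carry out the exact per-increment Gaussian computation, and (iii) prove the uniform-in-$a$ estimates of Lemmas~\ref{l:I1A}--\ref{l:I23}, which is where the non-monotonicity of $V(t)=t^{2H_t}$ actually has to be confronted.
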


\begin{remark}
Assumption  $\widetilde H \in (\frac12, H_{\min}] \cap (\frac12, \alpha)$ guarantees that that the remainder is negligible compared to the first term in \eqref{eq:main}.
Indeed, we have 
\[
2\widetilde H - H_{\max} > 2 \widetilde H - 1,
\quad
 H_{\min}+\alpha - 1 > 2 \widetilde H - 1,
\quad\text{and}\quad
2\alpha - 1 > 2 \widetilde H - 1.
\]
Hence,
\[
\frac{\int_\real R_n(a) \mu(da)}{n^{1 - 2 \widetilde H}} \to 0,
\quad \text{as } n\to\infty.
\]
\end{remark}

\begin{remark}
One can formulate the statement of Theorem~\ref{th:main} more precisely by considering the cases $\alpha > H_{\min}$ and $\alpha \in (\frac12, H_{\min}]$ separately.
Evidently, in the case $\alpha > H_{\min}$, \eqref{eq:main} holds with $\widetilde H = H_{\min}$. And in the general case, i.e., $\alpha > \frac12$, one has
\begin{equation}\label{eq:rate-simple}
\ex \abs{\int_0^1 \Psi'(X_s) dX_s - \sum_{k=1}^n \Psi'\left(X_{t_{k-1}}\right) \left(X_{t_k} - X_{t_{k-1}}\right)}
\le C  n^{1 - 2\min\{H_{\min}, \alpha\}}.
\end{equation}
Note that for $\frac12 < \alpha\le H_{\min}$, the leading term in \eqref{eq:main} has the same order $n^{1-2\alpha}$ as the remainder;
so we cannot obtain more precise rate of convergence than \eqref{eq:rate-simple}.
\end{remark}

\begin{remark} 
When the function $H$ is sufficiently smooth and the difference between $H_{\max}$ and $H_{\min}$ is rather small, one can establish a lower bound
in addition to \eqref{eq:main}. 
Namely, under additional assumptions
\begin{equation}\label{eq:additional}
\alpha > H_{\max} \quad \text{and} \quad
3 H_{\max} - 2 H_{\min} < 1,
\end{equation}
the following inequality holds
\begin{multline}\label{eq:lower-bound}
\ex \abs{\int_0^1 \Psi'(X_s) dX_s - \sum_{k=1}^n \Psi'\left(X_{t_{k-1}}\right) \left(X_{t_k} - X_{t_{k-1}}\right)}
\\*
\ge \int_\real \int_0^1 s^{-H_s} \varphi\left(\frac{a}{s^{H_s}}\right) ds\,\mu(da) \left(\frac1n\right)^{2H_{\max} - 1}
+ \int_\real R_n(a) \mu(da),
\end{multline}
where the same remainder that satisfies \eqref{eq:remain}.
Due to assumptions \eqref{eq:additional} the remainder in \eqref{eq:lower-bound} is negligible compared to the leading term.

The proof of the lower bound \eqref{eq:lower-bound} is conducted similarly to that of Theorem~\ref{th:main}, but one uses the inequality
\begin{equation}\label{eq:ge}
\vartheta(t,s)
\ge \abs{t-s}^{2H_{\max}} + g(t,s), \quad
\text{where }
\abs{g(t,s)} \le C\abs{t-s}^{H_{\min} + \alpha},
\end{equation}
instead of Lemma~\ref{l:mfBm} $(ii)$. 
The bound \eqref{eq:ge} is derived similarly to Lemma~\ref{l:mfBm}; the remainder function $g(t,s)$ is the same, namely
\[
\textstyle g(t,s) = \ex ( B^{H_t}_s - B^{H_s}_s )^2
+ 2 \ex[( B^{H_t}_t - B^{H_t}_s )( B^{H_t}_s - B^{H_s}_s)].
\]
\end{remark}

\begin{remark}
In particular, the assumptions \eqref{eq:additional} hold in the case $H_t=H=\const$ (i.e., when $X$ is a fractional Brownian motion). Indeed, in this case one can take $\alpha = 1$, $H_{\min} = H_{\max} = H$, and the bounds \eqref{eq:main} and \eqref{eq:lower-bound} imply that
\begin{multline*}
\ex \abs{\int_0^1 \Psi'(X_s) dX_s - \sum_{k=1}^n \Psi'\left(X_{t_{k-1}}\right) \left(X_{t_k} - X_{t_{k-1}}\right)}
\\*
= \int_\real \int_0^1 s^{-H} \varphi\left(\frac{a}{s^{H}}\right) ds\,\mu(da) \left(\frac1n\right)^{2H - 1}
+ \widetilde R_n(a),
\end{multline*}
with
$\widetilde R_n(a) \le C  n^{-H}$.
This coincides with the result of \cite{Tommi24} for the case of fractional Brownian motion.

Moreover, since in the case $H_t=H=\const$ we have an exact rate of convergence $n^{1 - 2H}$, we see that the result of Theorem~\ref{th:main} cannot be improved substantially.
\end{remark}

\section{Proofs}
\label{sec:proofs}

\subsection{Some auxiliary bounds}
In what follows we will often use the following simple upper bound for small $a$.
\begin{lemma}\label{l:boundedness}
Let $\mu\in\real$. Then for all $\abs{a} \le 1$ and $s>0$
\[
\varphi\left(\frac{a}{s^{\mu}}\right) \le C a^{-2} s^{2\mu} \varphi(a),
\]
where $C = 2e^{-1/2}$ is an absolute constant.
\end{lemma}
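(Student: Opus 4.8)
The plan is to reduce the inequality to the elementary fact that the map $t\mapsto t e^{-t}$ attains its maximum $e^{-1}$ on $[0,\infty)$ at $t=1$. First I would spell out both sides using the explicit Gaussian formula from \eqref{eq:phi}: since $\varphi(x)=\frac{1}{\sqrt{2\pi}}e^{-x^2/2}$, the claim is equivalent to
\[
e^{-a^2/(2 s^{2\mu})} \le 2 e^{-1/2}\, a^{-2} s^{2\mu}\, e^{-a^2/2}.
\]
If $a=0$ the right-hand side equals $+\infty$ (with the usual convention $0^{-2}=+\infty$) and there is nothing to prove, so I assume $0<\abs{a}\le 1$.

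Next, I apply the bound $t e^{-t}\le e^{-1}$ with $t=\frac{a^2}{2 s^{2\mu}}>0$, which gives at once
\[
e^{-a^2/(2 s^{2\mu})} = \frac1t\, t e^{-t} \le \frac1t\, e^{-1} = \frac{2 s^{2\mu}}{a^2}\, e^{-1};
\]
this already produces the factors $2$, $s^{2\mu}$ and $a^{-2}$. Finally, the hypothesis $\abs{a}\le 1$ yields $\frac{a^2}{2}\le\frac12$, hence $e^{-1}=e^{-1/2}e^{-1/2}\le e^{-1/2}e^{-a^2/2}$. Substituting this into the previous display gives
\[
e^{-a^2/(2 s^{2\mu})} \le \frac{2 s^{2\mu}}{a^2}\, e^{-1/2}\, e^{-a^2/2},
\]
which is exactly the asserted bound with $C=2 e^{-1/2}$.

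There is essentially no obstacle here: the only thing worth noticing is that splitting the loss $e^{-1}=\sup_{t\ge0}t e^{-t}$ as $e^{-1/2}\cdot e^{-1/2}$ is precisely what makes the argument work, one copy of $e^{-1/2}$ being used (together with $\abs a\le1$) to dominate the Gaussian factor $e^{-a^2/2}$ and the other surviving in the constant. No optimization of $C$ is needed for the applications in the sequel, so I would not attempt to sharpen it.
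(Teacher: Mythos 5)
Your proof is correct and follows essentially the same route as the paper: both rest on the bound $\sup_{t>0} t e^{-t}=e^{-1}$ applied at $t=a^2/(2s^{2\mu})$, combined with the observation that $\abs{a}\le 1$ forces $e^{-a^2/2}\ge e^{-1/2}$ (the paper phrases this as $\varphi(a)\ge\varphi(1)$). The only difference is cosmetic bookkeeping of the constant.
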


\begin{proof}
Denote
$h(x) = x e^{-x}$.
The derivative of $h(x)$ equals
$h'(x) = e^{-x} (1 - x) $, whence
$\max\limits_{x\in\real} h(x) = h(1) = e^{-1}$.
Therefore,
for any $a\in\real$, 
\[
\frac{a^2}{s^{2\mu}}\varphi\left(\frac{a}{s^{\mu}}\right) 
= \frac{2}{\sqrt{2\pi}} h\left(\frac{a^2}{2s^{2\mu}}\right)
\le \frac{2}{e \sqrt{2\pi}}.
\]
Note that $\varphi(a)$ decreases when $\abs{a}$ decreases.
Hence, for $\abs{a} \le 1$ one has
\[
\varphi(a) \ge \varphi (1) = \frac{1}{\sqrt{2\pi e}}.
\]
Combining two obtained inequalities we conclude the proof.
\end{proof}

The next auxiliary result provides an upper bound for an integral for specific power-ex\-po\-nen\-tial integrands. Such integrals often arise in subsequent proofs.
\begin{lemma}\label{l:integral}
Let $\lambda \in \real$ and $\mu\ne 0$. Then for all $\abs{a} \ge 1$,
\[
\int_0^1 s^{\lambda} \,\varphi \left(\frac{a}{s^\mu}\right) ds
\le C a^{-2} \varphi (a).
\]
The constant $C$ may depend on $\lambda$ and $\mu$.
\end{lemma}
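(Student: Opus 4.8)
The plan is to split the integral over $s \in (0,1]$ into two regions according to whether $s^\mu$ is small or comparable to $|a|$, and to estimate the Gaussian-type integrand separately on each. Recall $\varphi(x) = \frac{1}{\sqrt{2\pi}} e^{-x^2/2}$, so that $\varphi(a/s^\mu) = \frac{1}{\sqrt{2\pi}} \exp(-a^2/(2s^{2\mu}))$. The key observation is that for $|a| \ge 1$ the exponent $a^2/(2s^{2\mu})$ is large precisely when $s^{2\mu}$ is small (if $\mu > 0$) or when $s^{2\mu}$ is large, i.e.\ $s$ small (if $\mu < 0$); in either case the Gaussian decay will dominate any polynomial factor $s^\lambda$, and the whole contribution can be bounded by $C a^{-2} \varphi(a)$.

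First I would treat the case $\mu > 0$. Substitute $u = a^2/s^{2\mu}$, equivalently $s = |a|^{1/\mu} u^{-1/(2\mu)}$, so that $ds = -\frac{1}{2\mu} |a|^{1/\mu} u^{-1/(2\mu) - 1} du$. The range $s \in (0,1]$ becomes $u \in [a^2, \infty)$. Then
\[
\int_0^1 s^\lambda \varphi\!\left(\frac{a}{s^\mu}\right) ds
= \frac{|a|^{(\lambda+1)/\mu}}{2\mu\sqrt{2\pi}} \int_{a^2}^\infty u^{-\frac{\lambda+1}{2\mu} - 1} e^{-u/2}\, du.
\]
The remaining task is to bound the incomplete-Gamma-type integral $\int_{a^2}^\infty u^{\beta} e^{-u/2}\, du$ with $\beta = -\frac{\lambda+1}{2\mu}-1$ by something of the form $C (a^2)^{\gamma} e^{-a^2/2}$. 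Here I would use the standard tail estimate: since $a^2 \ge 1$, split off a factor $e^{-u/4}$ and use that $u^\beta e^{-u/4}$ is bounded by $C(1+ (a^2)^\beta)$ on $[a^2,\infty)$ when $\beta \le 0$, and by $C (a^2)^\beta e^{-(a^2)/4 \cdot \text{something}}$ handled via $u^\beta \le C_\beta e^{u/4}$ when $\beta > 0$; in both cases $\int_{a^2}^\infty u^\beta e^{-u/2}\,du \le C (1 + a^2)^{\max(\beta,0)} e^{-a^2/4} \cdot (\text{const})$, and then reabsorbing, $\le C (a^2)^{\beta} e^{-a^2/2}$ up to adjusting powers of $|a|$ (using $a^2 \ge 1$ freely to trade one power for another). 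Tracking the powers of $|a|$ back through the substitution, one sees the $|a|^{(\lambda+1)/\mu}$ prefactor cancels against $(a^2)^{\beta+1} = (a^2)^{-\frac{\lambda+1}{2\mu}}$, leaving $|a|^{-2} e^{-a^2/2}$ up to a constant, which is the claimed bound $C a^{-2} \varphi(a)$.

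For the case $\mu < 0$, write $\mu = -|\mu|$ so $s^\mu = s^{-|\mu|}$ and $a/s^\mu = a s^{|\mu|}$; now the Gaussian is $\varphi(a s^{|\mu|}) = \frac{1}{\sqrt{2\pi}} e^{-a^2 s^{2|\mu|}/2}$, which is smallest near $s=1$ and of order $1$ near $s=0$, so there is no singularity at $s=0$ but we still need the $a^{-2}$ gain. Substitute $u = a^2 s^{2|\mu|}$, $s = |a|^{-1/|\mu|} u^{1/(2|\mu|)}$, $s\in(0,1]$ becoming $u \in (0, a^2]$, and $ds = \frac{1}{2|\mu|}|a|^{-1/|\mu|} u^{1/(2|\mu|)-1} du$, giving
\[
\int_0^1 s^\lambda \varphi(a s^{|\mu|})\,ds
= \frac{|a|^{-(\lambda+1)/|\mu|}}{2|\mu|\sqrt{2\pi}} \int_0^{a^2} u^{\frac{\lambda+1}{2|\mu|}-1} e^{-u/2}\,du
\le \frac{|a|^{-(\lambda+1)/|\mu|}}{2|\mu|\sqrt{2\pi}} \int_0^\infty u^{\frac{\lambda+1}{2|\mu|}-1} e^{-u/2}\,du.
\]
When the exponent $\frac{\lambda+1}{2|\mu|} - 1 > -1$, i.e.\ $\lambda + 1 > 0$ — the relevant case in applications — this last integral is a finite Gamma constant, but the resulting bound is only $C|a|^{-(\lambda+1)/|\mu|}$, which is \emph{not} obviously $\le C a^{-2}\varphi(a)$ since it lacks the exponential. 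The remedy, and the genuinely technical point, is that one should \emph{not} drop the upper limit $a^2$: instead, on $(0,a^2]$ one keeps the Gaussian and uses $\int_0^{a^2} u^{\beta'} e^{-u/2}\,du \le e^{-a^2/4}\int_0^{a^2} u^{\beta'} e^{u/2 \cdot(\text{?})}\ldots$ — more cleanly, bound directly in the original variable: on $s\le 1$ one has $e^{-a^2 s^{2|\mu|}/2} \le e^{-a^2/4}$ only for $s$ bounded below, so split $(0,1] = (0, \tfrac12) \cup [\tfrac12, 1]$; on $[\tfrac12,1]$ the integrand is $\le C e^{-a^2 2^{-2|\mu|}/2} \le C e^{-ca^2}$ for some $c>0$, which is $\le C a^{-2}\varphi(a)$ trivially (using $a^2\ge1$ to absorb constants in the exponent), while on $(0,\tfrac12)$ — this is the subtle piece — one uses $e^{-a^2 s^{2|\mu|}/2} \le \frac{C_N}{(a^2 s^{2|\mu|})^N}$ for any fixed $N$ large enough that $\lambda - 2|\mu|N < -1$, making $\int_0^{1/2} s^{\lambda - 2|\mu|N}\,ds$ finite and producing the factor $a^{-2N}$; taking $N \ge 1$ gives at least $a^{-2}$, and since we also get exponential decay from the $[\tfrac12,1]$ piece one can interpolate to recover the full $a^{-2}\varphi(a)$.

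The main obstacle is exactly this $\mu < 0$ branch: naively extending the $u$-integral to $\infty$ throws away the Gaussian decay and yields only polynomial-in-$a$ control, whereas the statement demands the factor $\varphi(a) = \frac{1}{\sqrt{2\pi}}e^{-a^2/2}$. One must therefore keep the finite upper limit (or work in the $s$ variable) and exploit that $\varphi(a/s^\mu) \le \varphi(a)$ whenever $|s^{-\mu}| \le 1$ — that is, $\varphi(a/s^\mu) = \frac{1}{\sqrt{2\pi}} e^{-a^2 s^{2|\mu|}/2} \le \frac{1}{\sqrt{2\pi}} e^{-a^2/2} \cdot e^{a^2(1 - s^{2|\mu|})/2}$, and control the bad factor $e^{a^2(1-s^{2|\mu|})/2}$ near $s=0$ by polynomial decay as above. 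Once this trade-off is handled, collecting the pieces and using $a^2 \ge 1$ to normalize all the constants completes the proof; the constant $C$ depends on $\lambda$ and $\mu$ through the Gamma factors and the choice of $N$.
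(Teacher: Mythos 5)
Your treatment of the case $\mu>0$ is essentially the paper's own proof: the substitution $u=a^2/s^{2\mu}$ is exactly the paper's $z=x/(2s^{2\mu})$ with $x=a^2$, and both arguments reduce the claim to the incomplete-Gamma tail bound $\int_{a^2}^\infty u^{\beta}e^{-u/2}\,du\le C(a^2)^{\beta}e^{-a^2/2}$ with $\beta=-\frac{\lambda+1}{2\mu}-1$; the paper obtains this via l'H\^opital's rule plus continuity of the ratio $F$, while you invoke the tail estimate directly. One step of your derivation of that estimate would fail as written: splitting off $e^{-u/4}$ and bounding $u^{\beta}e^{-u/4}$ on $[a^2,\infty)$ yields only $C(1+a^2)^{\max(\beta,0)}e^{-a^2/4}$, and this cannot be ``reabsorbed'' into $C(a^2)^{\beta}e^{-a^2/2}$, since $e^{-a^2/4}/e^{-a^2/2}=e^{a^2/4}\to\infty$; powers of $|a|$ can be traded, but a deficit of $e^{-a^2/4}$ in the exponent cannot. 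The estimate itself is nevertheless true and standard: for $\beta\le 0$ bound $u^{\beta}\le(a^2)^{\beta}$ and keep the full $e^{-u/2}$; for $\beta>0$ write $u=a^2+v$ and use $(a^2+v)^{\beta}\le(a^2)^{\beta}(1+v)^{\beta}$ (valid since $a^2\ge1$), giving $\int_{a^2}^\infty u^{\beta}e^{-u/2}\,du\le(a^2)^{\beta}e^{-a^2/2}\int_0^\infty(1+v)^{\beta}e^{-v/2}\,dv$. With that repair your $\mu>0$ case closes and coincides with the paper's conclusion.

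The genuine gap is the branch $\mu<0$, which neither your patch nor any other argument can close, because the inequality is false there: for $\lambda=0$, $\mu=-1$ one has $\int_0^1\varphi(as)\,ds=\frac{1}{|a|}\int_0^{|a|}\varphi(v)\,dv\sim\frac{1}{2|a|}$, which decays only polynomially and is certainly not $O(a^{-2}e^{-a^2/2})$. Your own intermediate computation already reveals this (the quantity $C|a|^{-(\lambda+1)/|\mu|}$ is in fact the true order of the integral, not a lossy upper bound), and the proposed interpolation cannot rescue it: the piece over $[\tfrac12,1]$ gives $e^{-ca^2}$ with $c=2^{-2|\mu|-1}<\tfrac12$, which is itself not $O(a^{-2}\varphi(a))$, and the piece over $(0,\tfrac12)$ can only ever produce polynomial decay in $a$. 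You should not fault yourself for failing to prove a false statement: the paper's proof silently assumes $\mu>0$ as well (its substitution maps $s\in(0,1]$ onto $z\in[x/2,\infty)$ only when $\mu>0$; for $\mu<0$ the image is $(0,x/2]$), and the lemma is only ever applied with $\mu=H_{\min}>0$. The hypothesis ``$\mu\ne0$'' should read ``$\mu>0$''; under that reading your argument, with the tail estimate corrected, is a valid and essentially identical alternative to the paper's.
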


\begin{proof}
Denote $a^2 = x$,
\[
F(x) \coloneqq \frac{\int_0^1 s^{\lambda}\varphi\left(\frac{\sqrt{x}}{s^{\mu}}\right) ds}{x^{-1} \varphi (\sqrt{x})}
= \frac{\int_0^1 s^{\lambda} \exp\set{-\frac{x}{2s^{2\mu}}} ds}{x^{-1} \exp\set{-\frac{x}{2}}}.
\]
We need to show that $F$ is bounded on $[1,\infty)$.
By substitution $\frac{x}{2s^{2\mu}} = z$, we have
\[
F(x) = \frac{\frac{1}{2\mu} \left(\frac{x}{2}\right)^{\frac{\lambda+1}{2\mu}}\int_{x/2}^\infty z^{-\frac{\lambda+1}{2\mu}-1} e^{-z}dz}{x^{-1} e^{-x/2}}
= 2^{-\frac{\lambda+1}{2\mu}-1} \frac{\int_{x/2}^\infty z^{-\frac{\lambda+1}{2\mu}-1} e^{-z}dz}{ x^{-\frac{\lambda+1}{2\mu}-1} e^{-\frac{x}{2}}}.
\]

As $x \to \infty$, we get by l'H\^opital's rule
\[
\lim_{x\to\infty} F(x) = 2^{-\frac{\lambda+1}{2\mu}-1} \lim_{x\to\infty}\frac{-\frac12 (\frac{x}{2})^{-\frac{\lambda+1}{2\mu}-1} e^{-\frac{x}{2}}}{-\frac12 x^{-\frac{\lambda+1}{2\mu}-1} e^{-\frac{x}{2}} - \left(\frac{\lambda+1}{2\mu}+1\right)x^{-\frac{\lambda+1}{2\mu}-2} e^{-\frac{x}{2}}}
=2^{-\frac{\lambda+1}{2\mu}-1}.
\]
Taking into account the continuity of the function $F(x)$, we derive its boundedness for all $x\ge 1$.
\end{proof}

\subsection{Approximation estimates}

In this section we present upper bounds for various terms appearing in the proof of the main result.

\begin{lemma}
\label{l:I1A}
For all $a \in \real$, 
\[
\sum_{k=2}^n \frac{\left(t_k^{H_{t_k}} - t_{k-1}^{H_{t_{k-1}}}\right)^2}{t_{k-1}^{H_{t_{k-1}}}}\, \varphi\left(\frac{a}{t_{k-1}^{H_{t_{k-1}}}}\right) 
\le C \varphi(a) n^{-\min\{H_{\min}, 2\alpha-1\}}.
\]
\end{lemma}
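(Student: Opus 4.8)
The plan is to control the sum by splitting off the dominant contributions coming from the exponent change and from the power increments, and then estimate each piece separately. First I would write $t_k^{H_{t_k}} - t_{k-1}^{H_{t_{k-1}}}$ as a telescoped sum of two effects: the change in the base, $t_k^{H_{t_{k-1}}} - t_{k-1}^{H_{t_{k-1}}}$, and the change in the exponent, $t_k^{H_{t_k}} - t_k^{H_{t_{k-1}}}$. By the elementary inequality $(x+y)^2 \le 2x^2 + 2y^2$, it suffices to bound the two resulting sums. For the base-change term, a mean value theorem argument gives $t_k^{H_{t_{k-1}}} - t_{k-1}^{H_{t_{k-1}}} \le H_{t_{k-1}} \xi^{H_{t_{k-1}}-1} n^{-1}$ for some $\xi \in (t_{k-1}, t_k)$, and since $H_{t_{k-1}} - 1 < 0$ the worst case is $\xi \approx t_{k-1}$, yielding something like $t_{k-1}^{H_{t_{k-1}}-1} n^{-1}$; squaring and dividing by $t_{k-1}^{H_{t_{k-1}}}$ gives a term of order $t_{k-1}^{H_{t_{k-1}}-2} n^{-2}$. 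For the exponent-change term, I would use $\abs{t_k^{H_{t_k}} - t_k^{H_{t_{k-1}}}} = t_k^{H_{t_{k-1}}}\abs{t_k^{H_{t_k}-H_{t_{k-1}}} - 1} \le C t_k^{H_{t_{k-1}}} \abs{H_{t_k}-H_{t_{k-1}}} \abs{\log t_k}$, and then apply the H\"older bound \ref{A2}, $\abs{H_{t_k}-H_{t_{k-1}}} \le C n^{-\alpha}$, so this contributes at order $t_k^{2H_{t_{k-1}}} (\log t_k)^2 n^{-2\alpha}$ after squaring, divided by $t_{k-1}^{H_{t_{k-1}}}$.

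Next I would pull out the $\varphi$ factor using the auxiliary lemmas: Lemma~\ref{l:boundedness} handles $\abs a \le 1$, converting $\varphi(a/t_{k-1}^{H_{t_{k-1}}})$ into $C a^{-2} t_{k-1}^{2H_{t_{k-1}}} \varphi(a)$, while for $\abs a \ge 1$ one simply bounds $\varphi(a/t_{k-1}^{H_{t_{k-1}}}) \le \varphi(a) \cdot (\text{something})$ — actually the cleanest route is to observe that $\varphi(a/s^\mu) \le 1$ uniformly, but that loses the $\varphi(a)$ decay we want on the right-hand side, so instead I would treat the whole sum as a Riemann sum and compare it with the integrals estimated in Lemma~\ref{l:integral}. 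Concretely, after the reductions above the sum is bounded by a constant times
\[
n^{-2}\sum_{k=2}^n t_{k-1}^{H_{t_{k-1}}-2}\,\varphi\!\left(\frac{a}{t_{k-1}^{H_{t_{k-1}}}}\right)
+ n^{-2\alpha}\sum_{k=2}^n t_{k-1}^{H_{t_{k-1}}}(\log t_k)^2\,\varphi\!\left(\frac{a}{t_{k-1}^{H_{t_{k-1}}}}\right),
\]
up to handling the $t_{k-1}$-versus-$t_k$ discrepancy in the exponent, which is harmless since $H_{t_k}$ and $H_{t_{k-1}}$ differ by $O(n^{-\alpha})$. The first sum, multiplied by $n^{-1}$ to turn it into a Riemann sum of $\int_0^1 s^{H_s - 2}\varphi(a/s^{H_s})\,ds$, is of order $\varphi(a)$ times a convergent integral when combined with Lemma~\ref{l:integral} (with $\lambda = H_s - 2$, noting $H_s - 2 > -2$ so the power-exponential integral converges and is $\le C a^{-2}\varphi(a)$ for $\abs a \ge 1$, while Lemma~\ref{l:boundedness} covers $\abs a \le 1$). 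Thus this first piece is $O(\varphi(a)\, n^{-1})$, and since $H_{\min} \le 1$ we have $n^{-1} \le n^{-H_{\min}} \le n^{-\min\{H_{\min},2\alpha-1\}}$, matching the claimed rate. The second piece is $O(\varphi(a)\, n^{1-2\alpha})$ after the same Riemann-sum comparison (the $(\log t_k)^2$ factor is absorbed into the constant since $\int_0^1 s^{H_s}(\log s)^2 \varphi(a/s^{H_s})\,ds$ is still controlled by Lemma~\ref{l:integral}-type estimates), which is exactly $n^{-(2\alpha - 1)}$.

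The main obstacle, I expect, will be making the passage from the discrete sum to the integral estimates of Lemma~\ref{l:integral} fully rigorous while tracking the correct power of $n$. In particular one must be careful that the exponent appearing inside $\varphi$ at step $k$ is $t_{k-1}^{H_{t_{k-1}}}$ rather than the ``continuous'' value $s^{H_s}$, so comparing $\sum_k t_{k-1}^{\lambda}\varphi(a/t_{k-1}^{H_{t_{k-1}}}) \cdot n^{-1}$ with $\int_0^1 s^\lambda \varphi(a/s^{H_s})\,ds$ requires controlling the oscillation of $s \mapsto s^{H_s}$ over each subinterval; using \ref{A2} this oscillation is $O(n^{-\alpha} \log n + n^{-1})$, which is lower order and can be absorbed. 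A secondary technical point is the behaviour near $s = 0$: the term $s^{H_s - 2}$ is not integrable at the origin, so the Riemann-sum argument for the first sum must instead be carried out by bounding $t_{k-1}^{H_{t_{k-1}} - 2} \le C k^{2-H_{t_{k-1}}} n^{2 - H_{t_{k-1}}} \cdot$ wait — more carefully, $t_{k-1}^{H_{t_{k-1}}-2} = ((k-1)/n)^{H_{t_{k-1}}-2}$, and $\sum_{k=2}^n ((k-1)/n)^{H_{t_{k-1}}-2}$ behaves like $n^{2-H_{\min}}\sum_k (k-1)^{H_{t_{k-1}}-2} \asymp n^{2-H_{\min}}$ (the series $\sum (k-1)^{H-2}$ converges for $H < 1$), so the first piece is actually $O(\varphi(a)\, n^{-2} \cdot n^{2 - H_{\min}}) = O(\varphi(a)\, n^{-H_{\min}})$ after the $\varphi$-reduction is handled separately on $\abs a \le 1$ and $\abs a \ge 1$ — this is exactly the $n^{-H_{\min}}$ term in the claimed bound. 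One then takes the worse of $n^{-H_{\min}}$ and $n^{-(2\alpha-1)}$, giving $n^{-\min\{H_{\min}, 2\alpha - 1\}}$ as required.
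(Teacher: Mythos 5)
Your decomposition and the resulting rates are exactly those of the paper: the same split of $t_k^{H_{t_k}}-t_{k-1}^{H_{t_{k-1}}}$ into the exponent-change part $t_k^{H_{t_k}}-t_k^{H_{t_{k-1}}}$ (mean value theorem plus \ref{A2}, giving $n^{1-2\alpha}$ after absorbing the logarithm into a small power of $s$) and the base-change part $t_k^{H_{t_{k-1}}}-t_{k-1}^{H_{t_{k-1}}}\le t_{k-1}^{H_{t_{k-1}}-1}/n$ (giving $n^{-2}\sum_{k\ge2}t_{k-1}^{H_{\min}-2}\le Cn^{-H_{\min}}$ by convergence of the series, exactly your fallback computation). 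The one place where your write-up goes astray is the extraction of the factor $\varphi(a)$. The paper disposes of this in one line at the very start: since $t_{k-1}^{H_{t_{k-1}}}\le 1$, one has $\abs{a}/t_{k-1}^{H_{t_{k-1}}}\ge\abs{a}$ and hence $\varphi\bigl(a/t_{k-1}^{H_{t_{k-1}}}\bigr)\le\varphi(a)$ for every $a$, after which the whole lemma is a statement about a deterministic sum and no case split in $a$ is needed. Your proposed route for $\abs{a}\le1$ via Lemma~\ref{l:boundedness} would not work here: that lemma converts $\varphi\bigl(a/s^{\mu}\bigr)$ into $Ca^{-2}s^{2\mu}\varphi(a)$, and since the present sum carries no compensating $a^{2}$ factor (unlike the places where the paper actually invokes Lemma~\ref{l:boundedness}), you would be left with an $a^{-2}$ that blows up as $a\to0$ and destroys the uniform bound $C\varphi(a)$. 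Likewise the Riemann-sum comparison with Lemma~\ref{l:integral} is only available for $\abs{a}\ge1$ and fails at $a=0$ for the non-integrable piece $s^{H_s-2}$, as you yourself noticed. Replacing all of that machinery by the elementary inequality $\varphi\bigl(a/t_{k-1}^{H_{t_{k-1}}}\bigr)\le\varphi(a)$ closes the gap and reduces your argument to the paper's proof.
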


\begin{proof}
Evidently,
$\varphi\left(\frac{a}{t_{k-1}^{H_{t_{k-1}}}}\right) \le \varphi (a)$. Hence, it suffices to estimate the following sum:
\begin{equation}\label{eq:B1+B2}
\begin{split}
\sum_{k=2}^n \frac{\left(t_k^{H_{t_k}} - t_{k-1}^{H_{t_{k-1}}}\right)^2}{t_{k-1}^{H_{t_{k-1}}}}
&\le 2\sum_{k=2}^n \frac{\left(t_k^{H_{t_k}} - t_k^{H_{t_{k-1}}}\right)^2}{t_{k-1}^{H_{t_{k-1}}}}
+2\sum_{k=2}^n \frac{\left(t_k^{H_{t_{k-1}}} - t_{k-1}^{H_{t_{k-1}}}\right)^2}{t_{k-1}^{H_{t_{k-1}}}}
\\
&\eqqcolon 2(\BB_1 + \BB_2).
\end{split}
\end{equation}

First, let us bound $\BB_1$.
Using the mean value theorem and assumption \ref{A2}, we obtain
\begin{equation}\label{eq:tdiff}
\begin{split}
\abs{t_k^{H_{t_k}} - t_k^{H_{t_{k-1}}}}
&\le \abs{t_k^{H_{\min}} \log t_k} \abs{H_{t_k} - H_{t_{k-1}}}
\le C t_k^{H_{\min}} \abs{\log t_k} \abs{t_k -t_{k-1}}^\alpha
\\
&= C n^{-\alpha} t_k^{H_{\min}} \abs{\log t_k}.
\end{split}
\end{equation}
It is well known that for any $\delta>0$ there exists a constant $C = C(\delta) > 0$ such that
$\abs{\log s} \le C s^{-\delta}$ for all $s\in(0,1]$.
Fix any $0<\delta<H_{\min} - \frac12 H_{\max}$ (this is possible because $H_{\min} > \frac12 > \frac12 H_{\max}$). Then
\[
\abs{t_k^{H_{t_k}} - t_k^{H_{t_{k-1}}}}
\le C n^{-\alpha} t_k^{H_{\min}-\delta}.
\]
Therefore,
\begin{align*}
\BB_1 &\le C n^{-2\alpha} \sum_{k=2}^n \frac{t_k^{2H_{\min}-2\delta}}{t_{k-1}^{H_{\max}}}
=  C n^{-2\alpha}\sum_{k=2}^n \left(\frac{t_k}{t_{k-1}}\right)^{2H_{\min}-2\delta} t_{k-1}^{2H_{\min}-H_{\max}-2\delta}
\\
&\le C n^{1-2\alpha}\cdot \frac1n\sum_{k=1}^n t_{k-1}^{2H_{\min}-H_{\max}-2\delta},
\end{align*}
where we have used the bounds
\[
\left(\frac{t_k}{t_{k-1}}\right)^{2H_{\min}-2\delta}
= \left(\frac{k}{k-1}\right)^{2H_{\min}-2\delta}
= \left(1+\frac{1}{k-1}\right)^{2H_{\min}-2\delta}
\le 2^{2H_{\min}-2\delta},
\quad k\ge2.
\]
Further, the term $\frac1n\sum_{k=1}^n t_{k-1}^{2H_{\min}-H_{\max}-2\delta}$ is bounded as a convergent Riemann sum. Indeed,
\[
\frac1n\sum_{k=1}^n t_{k-1}^{2H_{\min}-H_{\max}-2\delta}
\to\int_0^1 s^{2H_{\min}-H_{\max}-2\delta}ds<\infty,
\quad\text{as } n\to\infty.
\]
Hence
\begin{equation}\label{eq:B1-bound}
\BB_1 \le C n^{1-2\alpha}.
\end{equation}

Next, we estimate $\BB_2$.
Note that the numerators can be bounded as follows:
\begin{align*}
t_k^{H_{t_{k-1}}} - t_{k-1}^{H_{t_{k-1}}}
&= H_{t_{k-1}}\int_{t_{k-1}}^{t_k} x^{H_{t_{k-1}}-1}dx
\le H_{t_{k-1}} t_{k-1}^{H_{t_{k-1}}-1}\! \left(t_k - t_{k-1}\right)
\le \frac{t_{k-1}^{H_{t_{k-1}}-1}}{n}.
\end{align*}
Therefore,
\begin{equation}\label{eq:B2-bound}
\BB_2 \le \frac{1}{n^2} \sum_{k=2}^n t_{k-1}^{H_{t_{k-1}}-2}
\le \frac{1}{n^2} \sum_{k=2}^n t_{k-1}^{H_{\min}-2}
= n^{-H_{\min}} \sum_{k=2}^n (k-1)^{H_{\min}-2}
\le C n^{-H_{\min}},
\end{equation}
since the series $\sum_{k=2}^\infty (k-1)^{H_{\min}-2}$ converges.

Combining \eqref{eq:B1+B2}--\eqref{eq:B2-bound}, we conclude the proof.
\end{proof}

Now we are ready to establish two key lemmas concerning approximations.
The following result is a counterpart of \cite[Lemma 4.10]{Tommi24}.
\begin{lemma}
\label{l:I1B}
For all $a \in \real$, 
\[
\abs{\int_0^1 s^{-H_s} \varphi\left(\frac{a}{s^{H_s}}\right) ds
- \frac1n\sum_{k=2}^n t_{k-1}^{-H_{t_{k-1}}} \varphi\left(\frac{a}{t_{k-1}^{H_{t_{k-1}}}}\right)}
\le C\varphi(a) n^{H_{\max}-1}.
\]
\end{lemma}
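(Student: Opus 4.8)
The plan is to estimate the difference between the integral and its Riemann sum by the standard device of comparing, on each subinterval $[t_{k-1},t_k]$, the value of the integrand at the left endpoint with the integral over the subinterval. Write
\[
\int_0^1 s^{-H_s} \varphi\!\left(\tfrac{a}{s^{H_s}}\right) ds
- \frac1n\sum_{k=2}^n t_{k-1}^{-H_{t_{k-1}}} \varphi\!\left(\tfrac{a}{t_{k-1}^{H_{t_{k-1}}}}\right)
= \int_0^{t_1} g(s)\,ds + \sum_{k=2}^n \int_{t_{k-1}}^{t_k}\bigl(g(s) - g(t_{k-1})\bigr) ds,
\]
where $g(s) = s^{-H_s}\varphi(a/s^{H_s})$. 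The first term, the contribution of the initial interval $[0,1/n]$, must be handled separately because $g$ blows up at $0$; there one bounds $g(s) \le s^{-H_{\max}}$ (using $\varphi\le\frac1{\sqrt{2\pi}}$) so that $\int_0^{t_1} g(s)\,ds \le C n^{H_{\max}-1}$, which already matches the claimed rate. For the remaining sum over $k\ge2$ I would bound $\abs{g(s)-g(t_{k-1})}$ on $[t_{k-1},t_k]$ using the mean value theorem, i.e.\ by $\frac1n \sup_{u\in[t_{k-1},t_k]}\abs{g'(u)}$.

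The main work is therefore to estimate $g'$. Differentiating $g(s) = s^{-H_s}\varphi(a/s^{H_s})$ produces, via the chain and product rules, several terms: one from differentiating the prefactor $s^{-H_s}$, one from differentiating the argument $a/s^{H_s}$ inside $\varphi$, each of which involves $\frac{d}{ds}H_s$ and $\log s$ factors together with powers $s^{-H_s}$, $s^{-2H_s}$, etc. The key observation is that $H_s$ is only assumed H\"older (assumption \ref{A2}), not differentiable, so I should not differentiate $H_s$ directly; instead I would argue on each subinterval by freezing the exponent. More precisely, on $[t_{k-1},t_k]$ I write $g(s) = g(t_{k-1}) + [\text{exponent-change terms}] + [\text{$s$-change terms with $H$ frozen at }H_{t_{k-1}}]$: the first bracket is controlled by $\abs{H_s - H_{t_{k-1}}}\le C n^{-\alpha}$ together with the $\log$ and power factors exactly as in \eqref{eq:tdiff} in the proof of Lemma~\ref{l:I1A}, and the second bracket, where $s\mapsto s^{-H}\varphi(a/s^{H})$ with $H=H_{t_{k-1}}$ fixed, is a smooth function whose derivative on $[t_{k-1},t_k]$ is of size $C t_{k-1}^{-H-1}\cdot(\text{bounded})$ since $\varphi$ and its argument-derivative are bounded. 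Summing the resulting bounds over $k$, each term is of the form $\frac1n\sum_{k=2}^n t_{k-1}^{-\beta}\cdot(\text{possibly a }\log)$ for appropriate $\beta<1$ (after absorbing logs into a small power, as done with $\delta$ in Lemma~\ref{l:I1A}), which is a convergent Riemann sum and hence $O(1)$, leaving overall factors $n^{H_{\max}-1}$ or $n^{-\alpha}\le n^{-1/2}\le n^{H_{\max}-1}$.

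Finally, the factor $\varphi(a)$ in the claimed bound is obtained by distinguishing the regimes $\abs a\le1$ and $\abs a\ge1$: for $\abs a\ge1$ one applies Lemma~\ref{l:integral} (and its discrete analogue, bounding the sum by a comparable integral) to replace the various integrals $\int_0^1 s^\lambda\varphi(a/s^\mu)\,ds$ by $C a^{-2}\varphi(a)\le C\varphi(a)$; for $\abs a\le1$ one uses Lemma~\ref{l:boundedness} to replace each $\varphi(a/s^\mu)$ by $C a^{-2}s^{2\mu}\varphi(a)$, after which the extra $s^{2\mu}$ powers make all the integrals and sums finite and the whole expression is bounded by $C\varphi(a)n^{H_{\max}-1}$. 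The main obstacle I anticipate is the bookkeeping around $g'$: keeping track of which terms carry a $\log s$ (and hence need the $\delta$-trick to be turned into a harmless power) and making sure that after the $\abs a\le1$ vs.\ $\abs a\ge1$ split every remaining Riemann sum genuinely converges, i.e.\ that the exponent of $t_{k-1}$ stays strictly below $1$ in absolute terms; this is exactly the point where the condition $H_{\min}>\frac12$ (so that $H_{\min}-\frac12 H_{\max}>0$ leaves room for $\delta$) is used, paralleling Lemma~\ref{l:I1A}.
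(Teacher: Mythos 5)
Your plan coincides with the paper's own proof: the same splitting into the initial interval $[0,t_1]$ plus, on each $[t_{k-1},t_k]$, an exponent-change part (controlled via \ref{A2} and the $\log$-absorbing $\delta$-trick, as in \eqref{eq:tdiff}) and an $s$-change part with $H$ frozen at $H_{t_{k-1}}$ (controlled via the mean value theorem), with the final $\varphi(a)$ factor extracted by the $\abs{a}\le 1$ versus $\abs{a}>1$ dichotomy through Lemmas~\ref{l:boundedness} and~\ref{l:integral}. The only small correction: on the initial interval you should bound $\varphi(a/s^{H_s})\le\varphi(a)$ (monotonicity of $\varphi$ in $\abs{x}$, since $s\le 1$) rather than $\varphi\le\frac{1}{\sqrt{2\pi}}$, as otherwise that term lacks the required factor $\varphi(a)$ for large $\abs{a}$.
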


\begin{proof}
We start by writing
\begin{align}
\MoveEqLeft[0.8]
\abs{\int_0^1 s^{-H_s} \varphi\left(\frac{a}{s^{H_s}}\right) ds
- \frac1n\sum_{k=2}^n t_{k-1}^{-H_{t_{k-1}}} \varphi\left(\frac{a}{t_{k-1}^{H_{t_{k-1}}}}\right)}
\notag\\
&= \abs{ \int_{0}^{t_1}\! s^{-H_s} \varphi\left(\frac{a}{s^{H_s}}\right) ds
+ \sum_{k=2}^n \int_{t_{k-1}}^{t_k} \!\!\left( s^{-H_s} \varphi\left(\frac{a}{s^{H_s}}\right) 
- t_{k-1}^{-H_{t_{k-1}}} \varphi\left(\frac{a}{t_{k-1}^{H_{t_{k-1}}}}\right)\right) ds}
\notag\\
&\le \CC_0 + \CC_1 + \CC_2,
\label{eq:C0+C1+C2}
\end{align}
where
\begin{align*}
\CC_0 &\coloneqq \int_{0}^{t_1} s^{-H_s} \varphi\left(\frac{a}{s^{H_s}}\right) ds,
\\
\CC_1 &\coloneqq \sum_{k=2}^n \int_{t_{k-1}}^{t_k}\abs{s^{-H_s} \varphi\left(\frac{a}{s^{H_s}}\right) - s^{-H_{t_{k-1}}} \varphi\left(\frac{a}{s^{H_{t_{k-1}}}}\right)}ds,
\\
\CC_2 &\coloneqq \sum_{k=2}^n \int_{t_{k-1}}^{t_k}\abs{ s^{-H_{t_{k-1}}} \varphi\left(\frac{a}{s^{H_{t_{k-1}}}}\right) - t_{k-1}^{-H_{t_{k-1}}} \varphi\left(\frac{a}{t_{k-1}^{H_{t_{k-1}}}}\right)}ds.
\end{align*}

The term $\CC_0$ can be bounded as follows:
\begin{equation}\label{eq:C0-bound}
\CC_0 = \int_{0}^{\frac1n} s^{-H_s} \varphi\left(\frac{a}{s^{H_s}}\right) ds 
\le \varphi(a)\int_{0}^{\frac1n} s^{-H_{\max}} ds
= C \varphi(a) n^{H_{\max}-1}.
\end{equation}

Let us consider $\CC_1$.
\begin{align}
\CC_1 &\le \sum_{k=2}^n \int_{t_{k-1}}^{t_k} \varphi\left(\frac{a}{s^{H_s}}\right) \abs{s^{-H_s}  - s^{-H_{t_{k-1}}}}ds
\notag\\
&\quad +  \sum_{k=2}^n \int_{t_{k-1}}^{t_k}s^{-H_{t_{k-1}}}\abs{\varphi\left(\frac{a}{s^{H_s}}\right) - \varphi\left(\frac{a}{s^{H_{t_{k-1}}}}\right)}ds
\notag\\
&\eqqcolon \CC_{11} + \CC_{12}.
\label{eq:C11+C12}
\end{align}
Since $\varphi\left(\frac{a}{s^{H_s}}\right) \le \varphi(a)$, we see that
\[
\CC_{11} \le \varphi(a)\sum_{k=2}^n \int_{t_{k-1}}^{t_k}\abs{s^{-H_s}  - s^{-H_{t_{k-1}}}}ds.
\]
Using the mean value theorem and the assumption \ref{A2}, we can bound the integrand as follows
\begin{align*}
\abs{s^{-H_s}  - s^{-H_{t_{k-1}}}}
&\le s^{-H_{\max}}\abs{\log s} \abs{H_s - H_{t_{k-1}}}
\\
&\le C s^{-H_{\max}}\abs{\log s} \abs{s - t_{k-1}}^\alpha
\le C s^{-H_{\max}}\abs{\log s} n^{-\alpha}.
\end{align*}
Then
\begin{equation}\label{eq:C11-bound}
\CC_{11} \le C\varphi(a)n^{-\alpha} \int_0^1s^{-H_{\max}}\abs{\log s}ds \le C\varphi(a)n^{-\alpha},
\end{equation}
because the function
$s \mapsto s^{-H_{\max}}\abs{\log s}$ is integrable on $[0,1]$.

In order to estimate $\CC_{12}$, 
note that 
\[
\partial_x \varphi\left(\frac{a}{s^x}\right)
= - \frac{a}{s^x} \varphi\left(\frac{a}{s^x}\right) \partial_x\left(\frac{a}{s^x}\right)
=  \frac{a^2}{s^{2x}} \varphi\left(\frac{a}{s^x}\right) \log s.
\]
Therefore,
\begin{equation}\label{eq:C12-deltaphi}
\varphi\left(\frac{a}{s^{H_s}}\right) - \varphi\left(\frac{a}{s^{H_{t_{k-1}}}}\right)
=  a^2 \log s \int_{H_{t_{k-1}}}^{H_s}  s^{-2x}\varphi\left(\frac{a}{s^x}\right) dx,
\end{equation}
whence
\begin{equation}\label{eq:C12}
\CC_{12} \le a^2\sum_{k=2}^n \int_{t_{k-1}}^{t_k}s^{-H_{\max}}\abs{\log s} \abs{ \int_{H_{t_{k-1}}}^{H_s}  s^{-2x}\varphi\left(\frac{a}{s^x}\right) dx}ds.
\end{equation}

Let us consider two cases separately.

(i) Case $\abs{a}\le 1$.
By Lemma~\ref{l:boundedness},
$a^2 s^{-2x}\varphi\left(\frac{a}{s^x}\right) \le C\varphi(a)$.
Using this bound and Assumption \ref{A2}, we get
\[
a^2 \abs{ \int_{H_{t_{k-1}}}^{H_s}  s^{-2x}\varphi\left(\frac{a}{s^x}\right) dx}
\le C\varphi(a) \abs{H_s - H_{t_{k-1}}}
\le C\varphi(a) \abs{s - t_{k-1}}^{\alpha}
\le C\varphi(a) n^{-\alpha}.
\]
We insert this inequality into \eqref{eq:C12} and obtain
\[
\CC_{12} \le C \varphi(a) n^{-\alpha} \int_{0}^{1}s^{-H_{\max}}\abs{\log s} ds
\le C\varphi(a)n^{-\alpha}.
\]

(ii) Case $\abs{a} > 1$.
The inner integral in \eqref{eq:C12} can be estimated as follows:
\begin{align*}
\abs{ \int_{H_{t_{k-1}}}^{H_s}  s^{-2x}\varphi\left(\frac{a}{s^x}\right) dx}
&\le s^{-2H_{\max}}\varphi\left(\frac{a}{s^{H_{\min}}}\right) \abs{H_s - H_{t_{k-1}}}
\\
&\le C n^{-\alpha} s^{-2H_{\max}}\varphi\left(\frac{a}{s^{H_{\min}}}\right).
\end{align*}
by Assumption \ref{A2}.
Then it follows from \eqref{eq:C12} that
\[
\CC_{12} \le C n^{-\alpha} a^2 \int_{0}^{1}s^{-3H_{\max}}\abs{\log s} \varphi\left(\frac{a}{s^{H_{\min}}}\right)  ds,
\]

Choosing arbitrary $\delta>0$ and applying again the bound $\abs{\log s} \le C_\delta s^{-\delta}$ we get
\[
\CC_{12} \le C n^{-\alpha} a^2 \int_{0}^{1}s^{-3H_{\max}-\delta} \varphi\left(\frac{a}{s^{H_{\min}}}\right)  ds
\le C\varphi(\alpha)n^{-\alpha},
\]
where the last inequality follows from Lemma \ref{l:integral}.

Thus, in both cases, $\CC_{12} \le C\varphi(\alpha)n^{-\alpha}$. Combining this result with \eqref{eq:C11+C12} and \eqref{eq:C11-bound}, we see that
\begin{equation}\label{eq:C1-bound}
\CC_1 \le C\varphi(\alpha)n^{-\alpha} \le C\varphi(\alpha)n^{H_{\max}-1}
\end{equation}
(because $-\alpha < - \frac12 < H_{\max}-1$).

Now, let us consider $\CC_2$. We have
\begin{align*}
\CC_2 &\le \sum_{k=2}^n \int_{t_{k-1}}^{t_k}s^{-H_{t_{k-1}}} \abs{\varphi\left(\frac{a}{s^{H_{t_{k-1}}}}\right) - \varphi\left(\frac{a}{t_{k-1}^{H_{t_{k-1}}}}\right)}ds
\\
&\quad +  \sum_{k=2}^n \int_{t_{k-1}}^{t_k} \varphi\left(\frac{a}{t_{k-1}^{H_{t_{k-1}}}}\right) \abs{s^{-H_{t_{k-1}}} - t_{k-1}^{-H_{t_{k-1}}}}ds
\\
&\eqqcolon \CC_{21} + \CC_{22}.
\end{align*}

Let us estimate $\CC_{21}$.
Using the relation $\varphi'(x) = - x\varphi(x)$, we compute
\begin{equation}\label{eq:derivative}
\partial_u \varphi\left(\frac{a}{u^{H_{t_{k-1}}}}\right)
 = H_{t_{k-1}} a^2 u^{-2H_{t_{k-1}} - 1} \varphi\left(\frac{a}{u^{H_{t_{k-1}}}}\right).
\end{equation}
Then 
$\CC_{21}$ can be rewritten as follows:
\begin{equation}\label{eq:C21}
\CC_{21} = a^2\sum_{k=2}^n H_{t_{k-1}} \int_{t_{k-1}}^{t_k}s^{-H_{t_{k-1}}} \int_{t_{k-1}}^s   u^{-2H_{t_{k-1}} - 1} \varphi\left(\frac{a}{u^{H_{t_{k-1}}}}\right)du\, ds.
\end{equation}

Let us consider two cases.

(i) Case $\abs{a} \le 1$.
By Lemma~\ref{l:boundedness}, 
$ a^2 u^{-2H_{t_{k-1}}} \exp\{-\frac{a^2}{2u^{2H_{t_{k-1}}}}\} \le C\varphi(a)$. Hence, \eqref{eq:C21} yields
\begin{align*}
\CC_{21} &\le C\varphi(a) \sum_{k=2}^n \int_{t_{k-1}}^{t_k}s^{-H_{t_{k-1}}} \int_{t_{k-1}}^s   u^{- 1} du\, ds
\\
&\le C\varphi(a) \sum_{k=2}^n \int_{t_{k-1}}^{t_k}s^{-H_{t_{k-1}}} t_{k-1}^{- 1} \left(s - t_{k-1}\right) ds
\\
&\le C\varphi(a)\sum_{k=2}^n t_{k-1}^{-H_{t_{k-1}}- 1} \int_{t_{k-1}}^{t_k}  \left(s - t_{k-1}\right) ds
\le C\varphi(a)\frac{1}{n^2}\sum_{k=2}^n t_{k-1}^{-H_{t_{k-1}}- 1} 
\\
&\le C\varphi(a)\frac{1}{n^2}\sum_{k=2}^n t_{k-1}^{-H_{\max}- 1} 
= C\varphi(a)n^{H_{\max}- 1} \sum_{k=2}^n (k-1)^{-H_{\max}- 1} 
\\
&\le C\varphi(a)n^{H_{\max}- 1},
\end{align*}
because 
$\sum_{k=2}^n (k-1)^{-H_{\max}-1} \le \sum_{k=2}^\infty (k-1)^{-H_{\max}-1}<\infty$.

(ii) Case $\abs{a} > 1$.
Changing the order of integration in the right-hand side of \eqref{eq:C21}, we obtain
\[
\CC_{21} \le a^2\sum_{k=2}^n \int_{t_{k-1}}^{t_k} u^{-2H_{t_{k-1}} - 1} \varphi\left(\frac{a}{u^{H_{t_{k-1}}}}\right) \int_{u}^{t_k}  s^{-H_{t_{k-1}}}ds\, du.
\]
The inner integral can be bounded as follows:
\[
\int_{u}^{t_k}  s^{-H_{t_{k-1}}}ds
\le u^{-H_{t_{k-1}}} \left(t_k - u\right) 
\le \frac1n u^{-H_{t_{k-1}}}.
\]
Then
\begin{align*}
\CC_{21} &\le \frac1n a^2\sum_{k=2}^n \int_{t_{k-1}}^{t_k} u^{-3H_{t_{k-1}} - 1} \varphi\left(\frac{a}{u^{H_{t_{k-1}}}}\right) du
\\
&\le \frac1n a^2\sum_{k=2}^n \int_{t_{k-1}}^{t_k} u^{-3H_{\max} - 1} \varphi\left(\frac{a}{u^{H_{\min}}}\right) du
\\
&\le \frac1n a^2\int_{0}^{1} u^{-3H_{\max} - 1} \varphi\left(\frac{a}{u^{H_{\min}}}\right) du.
\end{align*}
Applying Lemma~\ref{l:integral}, we get
\[
\CC_{21} \le C \varphi(a) n^{-1} \le C \varphi(a) n^{H_{\max}-1},
\]
that is, we have for $\abs{a} > 1$ the same upper bound for $\CC_{21}$ as in the case $\abs{a} \le 1$.

Now let us consider $\CC_{22}$.
Using the evident bound 
\[
\varphi\left(\frac{a}{t_{k-1}^{H_{t_{k-1}}}}\right) \le \varphi(a),
\]
we get
\begin{equation}\label{eq:C22}
\CC_{22} \le C\varphi(a) \sum_{k=2}^n \int_{t_{k-1}}^{t_k} \left( t_{k-1}^{-H_{t_{k-1}}} - s^{-H_{t_{k-1}}}\right) ds.
\end{equation}
By the mean value theorem, we obtain
\[
t_{k-1}^{-H_{t_{k-1}}} - s^{-H_{t_{k-1}}}
\le H_{t_{k-1}}t_{k-1}^{-H_{t_{k-1}}-1}\left(s-t_{k-1}\right)
\le \frac1n t_{k-1}^{-H_{\max}-1} = \frac{n^{H_{\max}}}{(k-1)^{H_{\max}+1}}.
\]
Substituting this bound into \eqref{eq:C22}, we arrive at
\[
\CC_{22} \le C\varphi(a) n^{H_{\max}-1} \sum_{k=2}^n (k-1)^{-H_{\max}-1} 
\le C\varphi(a) n^{H_{\max}-1}.
\]

Combining the above bounds for $\CC_{21}$ and $\CC_{22}$, we conclude that
\begin{equation}\label{eq:C2-bound}
\CC_{2} \le C\varphi(a) n^{H_{\max}-1}.
\end{equation}

Finally, taking into account the representation \eqref{eq:C0+C1+C2} and the inequalities \eqref{eq:C0-bound}, \eqref{eq:C1-bound} and \eqref{eq:C2-bound}, we complete the proof.
\end{proof}

\begin{lemma}
\label{l:I23}
Let $Y \sim \ND(0,1)$.
Then for all $a \in \real$, 
\begin{multline*}
\abs{\sum_{k=2}^n \left(t_k^{H_{t_k}} \left[\varphi\left(\frac{a}{t_k^{H_{t_k}}}\right) - \varphi\left(\frac{a}{t_{k-1}^{H_{t_{k-1}}}}\right)\right]\right.\right.
\\
- \left.\left. a\left[\prob\left(Y>\frac{a}{t_k^{H_{t_k}}}\right) - \prob\left(Y>\frac{a}{t_{k-1}^{H_{t_{k-1}}}}\right)\right]\right)}
\le C\varphi(a) n^{-\min\{H_{\min}, 2\alpha-1\}}.
\end{multline*}
\end{lemma}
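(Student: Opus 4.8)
\textbf{The plan is to} telescope the two sums and reduce the whole estimate to a single term coming from the failure of the discrete increments $t_k^{H_{t_k}}-t_{k-1}^{H_{t_{k-1}}}$ to behave like a smooth differential. First I would introduce the function
\[
\psi(v,a) \coloneqq v\,\varphi\!\left(\frac{a}{v}\right) - a\,\prob\!\left(Y>\frac{a}{v}\right),
\quad v>0,\ a\in\real,
\]
so that the quantity inside the absolute value is exactly $\sum_{k=2}^n\bigl[\psi(t_k^{H_{t_k}},a) - \psi(t_{k-1}^{H_{t_{k-1}}},a)\bigr]$ together with the cross terms; more precisely, a short computation shows
\[
t_k^{H_{t_k}}\!\left[\varphi\!\left(\tfrac{a}{t_k^{H_{t_k}}}\right)-\varphi\!\left(\tfrac{a}{t_{k-1}^{H_{t_{k-1}}}}\right)\right]
- a\!\left[\prob\!\left(Y>\tfrac{a}{t_k^{H_{t_k}}}\right)-\prob\!\left(Y>\tfrac{a}{t_{k-1}^{H_{t_{k-1}}}}\right)\right]
= \bigl(\psi(t_k^{H_{t_k}},a)-\psi(t_{k-1}^{H_{t_{k-1}}},a)\bigr)
- \bigl(t_k^{H_{t_k}}-t_{k-1}^{H_{t_{k-1}}}\bigr)\varphi\!\left(\tfrac{a}{t_{k-1}^{H_{t_{k-1}}}}\right).
\]
The sum of the first differences telescopes to $\psi(t_n^{H_{t_n}},a)-\psi(t_1^{H_{t_1}},a) = \psi(1,a)-\psi(t_1^{H_{t_1}},a)$, and I would check directly that $\psi(1,a)$ and $\psi(t_1^{H_{t_1}},a)$ are each bounded by $C\varphi(a) n^{H_{\max}-1}$ (using $t_1=1/n$ and the elementary bounds $\prob(Y>x)\le\varphi(x)/x$ for $x>0$, together with Lemma~\ref{l:boundedness} and Lemma~\ref{l:integral} to handle the two regimes $|a|\le1$ and $|a|>1$). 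Since $H_{\max}-1 < -\tfrac12 < -\min\{H_{\min},2\alpha-1\}$, this telescoped boundary term is of the required order.

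\textbf{Next I would} bound the remaining sum $\sum_{k=2}^n\bigl(t_k^{H_{t_k}}-t_{k-1}^{H_{t_{k-1}}}\bigr)\varphi\bigl(\tfrac{a}{t_{k-1}^{H_{t_{k-1}}}}\bigr)$ --- but note it is \emph{not} a telescoping sum because the argument of $\varphi$ lags at $t_{k-1}$. The trick is to write $\varphi\bigl(\tfrac{a}{t_{k-1}^{H_{t_{k-1}}}}\bigr) = \varphi\bigl(\tfrac{a}{t_k^{H_{t_k}}}\bigr) + \bigl[\varphi\bigl(\tfrac{a}{t_{k-1}^{H_{t_{k-1}}}}\bigr) - \varphi\bigl(\tfrac{a}{t_k^{H_{t_k}}}\bigr)\bigr]$. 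The contribution with $\varphi\bigl(\tfrac{a}{t_k^{H_{t_k}}}\bigr)$ combines with the telescoped term above into something I can again reorganize, but more efficiently I would just bound the error term $\bigl|t_k^{H_{t_k}}-t_{k-1}^{H_{t_{k-1}}}\bigr|\cdot\bigl|\varphi\bigl(\tfrac{a}{t_{k-1}^{H_{t_{k-1}}}}\bigr) - \varphi\bigl(\tfrac{a}{t_k^{H_{t_k}}}\bigr)\bigr|$: by Cauchy--Schwarz over $k$ and the estimates already assembled (the square-summed increments are controlled by Lemma~\ref{l:I1A}, which gives $\sum_k (t_k^{H_{t_k}}-t_{k-1}^{H_{t_{k-1}}})^2/t_{k-1}^{H_{t_{k-1}}} \le C\varphi(a)^{-1}\cdot(\text{stuff})$-type bounds, and the $\varphi$-difference is handled via the integral representation \eqref{eq:C12-deltaphi}-style identity $\partial_x\varphi(a/s^x) = a^2 s^{-2x}\varphi(a/s^x)\log s$). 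Here the two cases $|a|\le1$ and $|a|>1$ must again be separated, invoking Lemma~\ref{l:boundedness} in the first and Lemma~\ref{l:integral} in the second.

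\textbf{For the main linear-in-increment term} the cleanest route is to observe that $\partial_v\psi(v,a) = \varphi(a/v)$ — this is a direct calculation, since $\partial_v\bigl(v\varphi(a/v)\bigr) = \varphi(a/v) + v\cdot\varphi'(a/v)\cdot(-a/v^2) = \varphi(a/v) - (a/v)\varphi'(a/v)\cdot(1/v)$... let me instead use $\partial_v\bigl(a\,\prob(Y>a/v)\bigr) = a\cdot(-\varphi(a/v))\cdot(-a/v^2) = (a^2/v^2)\varphi(a/v)$ and $\partial_v\bigl(v\varphi(a/v)\bigr) = \varphi(a/v) + (a^2/v^2)\varphi(a/v)$, so indeed $\partial_v\psi(v,a)=\varphi(a/v)$. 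Hence $\psi(t_k^{H_{t_k}},a)-\psi(t_{k-1}^{H_{t_{k-1}}},a) = \int_{t_{k-1}^{H_{t_{k-1}}}}^{t_k^{H_{t_k}}}\varphi(a/v)\,dv$, and the term we must control is $\bigl|\int_{t_{k-1}^{H_{t_{k-1}}}}^{t_k^{H_{t_k}}}\bigl[\varphi(a/v)-\varphi(a/t_{k-1}^{H_{t_{k-1}}})\bigr]dv\bigr| \le \bigl|t_k^{H_{t_k}}-t_{k-1}^{H_{t_{k-1}}}\bigr|\cdot\sup_v\bigl|\varphi(a/v)-\varphi(a/t_{k-1}^{H_{t_{k-1}}})\bigr|$ over the small interval. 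Differentiating $\varphi(a/v)$ in $v$ and controlling it by $a^2 v^{-3}\varphi(a/v)$ (times a log, absorbed via $|\log s|\le C_\delta s^{-\delta}$), summing over $k$, and splitting $|a|\le1$ versus $|a|>1$ as before, produces a bound $\le C\varphi(a)\sum_k\bigl(t_k^{H_{t_k}}-t_{k-1}^{H_{t_{k-1}}}\bigr)^2 / t_{k-1}^{(\cdots)}$, which Lemma~\ref{l:I1A} closes at order $n^{-\min\{H_{\min},2\alpha-1\}}$.

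\textbf{The main obstacle} I anticipate is precisely the non-telescoping middle term: because the increment $t_k^{H_{t_k}}-t_{k-1}^{H_{t_{k-1}}}$ is \emph{not} of one sign and is not comparable to a clean $t_{k-1}^{H-1}/n$ (the time-varying exponent contributes the extra $n^{-\alpha}t_k^{H_{\min}}|\log t_k|$ term analyzed in \eqref{eq:tdiff}), one cannot simply mimic the fractional-Brownian-motion argument of \cite[Lemma~4.11]{Tommi24}. The resolution is to split the increment into its "radial" part $t_k^{H_{t_{k-1}}}-t_{k-1}^{H_{t_{k-1}}}$ and its "Hurst-variation" part $t_k^{H_{t_k}}-t_k^{H_{t_{k-1}}}$, exactly as in the proof of Lemma~\ref{l:I1A}, estimate each contribution to the sum separately (the first by a second-order Taylor expansion of $\psi$ in $v$ giving an $n^{-1}$-type gain after using $\sum(k-1)^{H_{\min}-2}<\infty$, the second by the $n^{-2\alpha}$-weighted Riemann-sum bound), and combine. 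The case distinction $|a|\le1$ / $|a|>1$ — forced by the fact that $a^2 v^{-2x}\varphi(a/v)$ is uniformly bounded only through Lemma~\ref{l:boundedness} for small $a$ and through the integral bound Lemma~\ref{l:integral} for large $a$ — must be threaded through every one of these sub-estimates, which makes the bookkeeping lengthy but routine.
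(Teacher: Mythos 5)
Your second paragraph contains a genuine error. If you split each summand as $[\psi(v_k,a)-\psi(v_{k-1},a)]-(v_k-v_{k-1})\varphi(a/v_{k-1})$ with $v_k=t_k^{H_{t_k}}$ and then estimate the two resulting sums \emph{separately}, the telescoped part equals $\psi(1,a)-\psi(v_1,a)$, and $\psi(1,a)=\varphi(a)-a\,\prob(Y>a)=\ex[(Y-a)^+]$ is a strictly positive quantity that does not depend on $n$ at all (and for $a\to-\infty$ it grows like $\abs{a}$, so it is not even $O(\varphi(a))$); likewise $\sum_k(v_k-v_{k-1})\varphi(a/v_{k-1})$ is a left-endpoint Riemann sum for $\int_{v_1}^{1}\varphi(a/v)\,dv=O(1)$. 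Separating the pieces destroys exactly the cancellation the lemma asserts. Fortunately your third and fourth paragraphs abandon this detour: since $\partial_v\psi(v,a)=\varphi(a/v)$ (your computation is correct), each summand equals the quadrature error $\int_{v_{k-1}}^{v_k}\left[\varphi(a/v)-\varphi(a/v_{k-1})\right]dv$, bounded by $(v_k-v_{k-1})^2\sup_u a^2u^{-3}\varphi(a/u)$ with $u$ ranging between $v_{k-1}$ and $v_k$ (no logarithm arises in this step, by the way, since you differentiate in $v$ rather than in the exponent). Using the uniform comparability $v_k\asymp v_{k-1}$ for $k\ge2$, the case $\abs{a}\le1$ closes via Lemma~\ref{l:boundedness} together with Lemma~\ref{l:I1A}, and the case $\abs{a}>1$ closes by inserting the two bounds for $(v_k-v_{k-1})^2$ from the proof of Lemma~\ref{l:I1A} and recognizing Riemann sums for integrals controlled by Lemma~\ref{l:integral}. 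So the argument is correct once the telescoping paragraph is deleted.

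That final route is genuinely different from the paper's. The paper passes from $(t_{k-1},H_{t_{k-1}})$ to $(t_k,H_{t_k})$ in two legs---first varying the Hurst exponent at fixed time (the term $\DD_1$), then varying the time at fixed exponent (the term $\DD_2$)---and in each leg represents both the $\varphi$-difference and the $\prob$-difference as explicit integrals whose combination produces a small factor $t_k^{H_{t_k}}-t_k^{x}$, respectively $t_k^{H_{t_k}}-s^{H_{t_{k-1}}}$, inside the integrand. Your substitution $v=t^{H_t}$ packages that cancellation once and for all in the identity $\partial_v\psi(v,a)=\varphi(a/v)$ and reduces everything to the squared increments $(v_k-v_{k-1})^2$, which Lemma~\ref{l:I1A} was already built to control; this is shorter and makes the second-order structure transparent. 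The price is that you must explicitly verify $v_k/v_{k-1}\in[c,C]$ uniformly (which follows from $\abs{H_{t_k}-H_{t_{k-1}}}\,\abs{\log t_{k-1}}\le Cn^{-\alpha}\log n\to0$, but does need to be said), and the $\abs{a}\le1$ versus $\abs{a}>1$ bookkeeping is inherited in full.
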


\begin{proof}
We decompose the left-hand side of the desired inequality as follows
\begin{multline}\label{eq:D-decomp}
\abs{\sum_{k=1}^n \left(t_k^{H_{t_k}} \left[\varphi\left(\frac{a}{t_k^{H_{t_k}}}\right) - \varphi\left(\frac{a}{t_{k-1}^{H_{t_{k-1}}}}\right)\right]\right.\right.
\\
-\left.\left. a\left[\prob\left(Y>\frac{a}{t_k^{H_{t_k}}}\right) - \prob\left(Y>\frac{a}{t_{k-1}^{H_{t_{k-1}}}}\right)\right]\right)}
\\
\le \DD_0 + \DD_1 + \DD_2,
\end{multline}
where
\begin{align}
\DD_0 &= \abs{t_1^{H_{t_1}}\varphi\left(\frac{a}{t_1^{H_{t_1}}}\right)
- a\prob\left(Y>\frac{a}{t_1^{H_{t_1}}}\right)},
\notag
\\
\DD_1 &= \abs{\sum_{k=2}^n \left(t_k^{H_{t_k}} \left[\varphi\left(\frac{a}{t_k^{H_{t_k}}}\right) - \varphi\left(\frac{a}{t_{k}^{H_{t_{k-1}}}}\right)\right]
\right.\right.
\notag\\*
&\qquad\qquad- \left.\left. a\left[\prob\left(Y>\frac{a}{t_k^{H_{t_k}}}\right) - \prob\left(Y>\frac{a}{t_{k}^{H_{t_{k-1}}}}\right)\right]\right)},
\label{eq:D1-def}
\\
\DD_2 &= \abs{\sum_{k=2}^n \left(t_k^{H_{t_k}} \left[\varphi\left(\frac{a}{t_k^{H_{t_{k-1}}}}\right) - \varphi\left(\frac{a}{t_{k-1}^{H_{t_{k-1}}}}\right)\right]\right.\right.
\notag\\*
&\qquad\qquad- \left.\left.a\left[\prob\left(Y>\frac{a}{t_k^{H_{t_{k-1}}}}\right) - \prob\left(Y>\frac{a}{t_{k-1}^{H_{t_{k-1}}}}\right)\right]\right)}.
\label{eq:D2-def}
\end{align}
Let us estimate each term separately.
In order to bound $\DD_0$, we observe that
\begin{equation}\label{eq:D0-ineq}
t_1^{H_{t_1}}\varphi\left(\frac{a}{t_1^{H_{t_1}}}\right)
\ge a\prob\left(Y>\frac{a}{t_1^{H_{t_1}}}\right).
\end{equation}
Indeed, denoting $x = \frac{a}{t_1^{H_{t_1}}}$, we get by \eqref{eq:phi}
\[
x \prob(Y>x) = \ex[x\ind_{Y>x}] \le \ex[Y\ind_{Y>x}] = \varphi(x),
\]
whence \eqref{eq:D0-ineq} follows.
Then taking into account \eqref{eq:D0-ineq}, we may write
\begin{equation}\label{eq:D0-bound}
\DD_0 \le t_1^{H_{t_1}}\varphi\left(\frac{a}{t_1^{H_{t_1}}}\right)
\le t_1^{H_{\min}}\varphi(a) = n^{-H_{\min}} \varphi(a).
\end{equation}

Now let us consider the term $\DD_1$.
Similarly to \eqref{eq:C12-deltaphi}, we have
\begin{equation}\label{eq:D1-deltaphi}
\varphi\left(\frac{a}{t_k^{H_{t_k}}}\right) - \varphi\left(\frac{a}{t_{k}^{H_{t_{k-1}}}}\right)
=  a^2 \log t_k \int_{H_{t_{k-1}}}^{H_{t_k}}  t_k^{-2x}\varphi\left(\frac{a}{t_k^x}\right) dx.
\end{equation}
Furthermore,
\[
\partial_x \prob\left(Y>\frac{a}{t_k^x}\right)
= - \varphi\left(\frac{a}{t_k^x}\right) \partial_x \left(\frac{a}{t_k^x}\right)
= \frac{a}{t_k^{x}} \varphi\left(\frac{a}{t_k^x}\right) \log t_k.
\]
Hence,
\begin{equation}\label{eq:D1-deltaprob}
\prob\left(Y>\frac{a}{t_k^{H_{t_k}}}\right) - \prob\left(Y>\frac{a}{t_{k}^{H_{t_{k-1}}}}\right)
= a \log t_k \int_{H_{t_{k-1}}}^{H_{t_k}}  t_k^{-x}\varphi\left(\frac{a}{t_k^x}\right) dx.
\end{equation}

We insert \eqref{eq:D1-deltaphi} and \eqref{eq:D1-deltaprob} into \eqref{eq:D1-def} and obtain
\begin{align*}
\DD_1 &= \abs{\sum_{k=2}^n \left(t_k^{H_{t_k}} a^2 \log t_k \int_{H_{t_{k-1}}}^{H_{t_k}}  t_k^{-2x}\varphi\left(\frac{a}{t_k^x}\right) dx
- a^2 \log t_k \int_{H_{t_{k-1}}}^{H_{t_k}}  t_k^{-x}\varphi\left(\frac{a}{t_k^x}\right) dx\right)}
\\
&\le a^2 \sum_{k=2}^n \abs{\log t_k}  \abs{\int_{H_{t_{k-1}}}^{H_{t_k}}\varphi\left(\frac{a}{t_k^x}\right)t_k^{-2x}\abs{t_k^{H_{t_k}}   -   t_k^{x}} dx}.
\end{align*}
Using the mean value theorem, we get similarly to \eqref{eq:tdiff}
\[
\abs{t_k^{H_{t_k}}   -   t_k^{x}} \le t_k^{H_{\min}} \abs{\log t_k} \abs{H_{t_k} - x}
\le Cn^{-\alpha}t_k^{H_{\min}} \abs{\log t_k}.
\]
Consequently,
\begin{align*}
\DD_1 &\le C n^{-\alpha} a^2 \sum_{k=2}^n t_k^{H_{\min}} \abs{\log t_k}^2 \abs{\int_{H_{t_{k-1}}}^{H_{t_k}}\varphi\left(\frac{a}{t_k^x}\right)t_k^{-2x}dx}
\\
&\le C n^{-\alpha} a^2 \sum_{k=2}^n t_k^{H_{\min}-2\delta} \abs{\int_{H_{t_{k-1}}}^{H_{t_k}}\varphi\left(\frac{a}{t_k^x}\right)t_k^{-2x}dx},
\end{align*}
where we can choose $\delta\in(0,H_{\min}/2)$.

Let us consider two cases.

(i) Case $\abs{a} \le 1$. 
Using the bound 
$a^2 t_k^{-2x}\varphi\left(\frac{a}{t_k^x}\right) \le C\varphi(a)$ (see Lemma~\ref{l:boundedness}) and Assumption \ref{A2}, we obtain
\begin{align*}
a^2 \abs{\int_{H_{t_{k-1}}}^{H_{t_k}}\varphi\left(\frac{a}{t_k^x}\right)t_k^{-2x}dx}
&\le C\varphi(a) \abs{H_{t_k} - H_{t_{k-1}}}
\\
&\le C\varphi(a) \abs{t_k - t_{k-1}}^{\alpha} = C\varphi(a) n^{-\alpha}.
\end{align*}
Hence,
\[
\DD_1 \le C n^{1 - 2\alpha} \left(\frac1n \sum_{k=1}^n t_k^{H_{\min}-2\delta}\right)  \le C n^{1 - 2\alpha},
\]
since
$\frac1n \sum_{k=1}^n t_k^{H_{\min}-2\delta} \to 
\int_0^1 s^{H_{\min}-2\delta} ds$, as $n\to\infty$.

(ii) Case $\abs{a} > 1$. Since
\begin{align*}
\abs{\int_{H_{t_{k-1}}}^{H_{t_k}}\varphi\left(\frac{a}{t_k^x}\right)t_k^{-2x}dx}
&\le \varphi\left(\frac{a}{t_k^{H_{\min}}}\right)t_k^{-2H_{\max}}
\abs{H_{t_k} - H_{t_{k-1}}}
\\
&\le  C n^{-\alpha} \varphi\left(\frac{a}{t_k^{H_{\min}}}\right)t_k^{-2H_{\max}},
\end{align*}
we see that
\begin{align*}
\DD_1 &\le C n^{1-2\alpha} a^2 \sum_{k=2}^n   \varphi\left(\frac{a}{t_k^{H_{\min}}}\right)t_k^{H_{\min}-2H_{\max}-2\delta} \frac1n.
\end{align*}
Note that
\[
\sum_{k=1}^n   \varphi\left(\frac{a}{t_k^{H_{\min}}}\right)t_k^{H_{\min}-2H_{\max}-2\delta} \frac1n
\to \int_0^1\varphi\left(\frac{a}{s^{H_{\min}}}\right)s^{H_{\min}-2H_{\max}-2\delta} ds,
\quad\text{as } n\to\infty,
\]
where the integral is bounded by $C a^{-2}\varphi(a)$ according to Lemma \ref{l:integral}.
Thus, in this case we also have
\begin{equation}\label{eq:D1-bound}
\DD_1 \le C \varphi(a) n^{1 - 2\alpha}.
\end{equation}

Now it remains to estimate $\DD_2$. 
Using \eqref{eq:derivative}, we can write
\begin{equation}\label{eq:D2-deltaphi}
\varphi\left(\frac{a}{t_k^{H_{t_{k-1}}}}\right) - \varphi\left(\frac{a}{t_{k-1}^{H_{t_{k-1}}}}\right)
= H_{t_{k-1}} a^2 \int_{t_{k-1}}^{t_k}s^{- 2H_{t_{k-1}}-1} \varphi\left(\frac{a}{s^{H_{t_{k-1}}}}\right) ds.
\end{equation}
In addition,
\[
\partial_s \prob\left(Y>\frac{a}{s^{H_{t_{k-1}}}}\right)
= \partial_s \int_{\frac{a}{s^{H_{t_{k-1}}}}}^\infty \varphi(v)dv
= H_{t_{k-1}} a s^{- H_{t_{k-1}}-1} \varphi\left(\frac{a}{s^{H_{t_{k-1}}}}\right)
\]
and
\begin{equation}\label{eq:D2-deltaprob}
\prob\left(Y>\frac{a}{t_k^{H_{t_{k-1}}}}\right) - \prob\left(Y>\frac{a}{t_{k-1}^{H_{t_{k-1}}}}\right)
= H_{t_{k-1}} a \int_{t_{k-1}}^{t_k} s^{- H_{t_{k-1}}-1} \varphi\left(\frac{a}{s^{H_{t_{k-1}}}}\right) ds.
\end{equation}
After substitution \eqref{eq:D2-deltaphi} and \eqref{eq:D2-deltaprob} into \eqref{eq:D2-def} we arrive at
\begin{align}
\DD_2 &= a^2\abs{\sum_{k=2}^n H_{t_{k-1}} \int_{t_{k-1}}^{t_k} s^{- 2H_{t_{k-1}}-1} \varphi\left(\frac{a}{s^{H_{t_{k-1}}}}\right)\left(t_k^{H_{t_k}} 
- s^{H_{t_{k-1}}}\right)ds}
\notag\\
&\le a^2\sum_{k=2}^n H_{t_{k-1}} \int_{t_{k-1}}^{t_k} s^{- 2H_{t_{k-1}}-1} \varphi\left(\frac{a}{s^{H_{t_{k-1}}}}\right) \abs{t_k^{H_{t_k}} 
- t_k^{H_{t_{k-1}}}}ds
\notag\\
&\quad + a^2 \sum_{k=2}^n H_{t_{k-1}} \int_{t_{k-1}}^{t_k} s^{- 2H_{t_{k-1}}-1} \varphi\left(\frac{a}{s^{H_{t_{k-1}}}}\right) \abs{t_k^{H_{t_{k-1}}} - s^{H_{t_{k-1}}}}ds
\notag\\
&\eqqcolon \DD_{21} + \DD_{22}.
\label{eq:D21+D22}
\end{align}
In order to bound $\DD_{21}$, we write using \eqref{eq:tdiff}
\[
\abs{t_k^{H_{t_k}} - t_k^{H_{t_{k-1}}}}
\le C t_k^{H_{\min}} n^{-\alpha} \log n
\le C n^{-\alpha} \log n.
\]
Then
\[
\DD_{21} \le C n^{-\alpha} \log n \sum_{k=2}^n a^2\int_{t_{k-1}}^{t_k} s^{- 2H_{t_{k-1}}-1} \varphi\left(\frac{a}{s^{H_{t_{k-1}}}}\right) ds.
\]

(i) Case $\abs{a} \le 1$.
Since by Lemma~\ref{l:boundedness}
\begin{equation}\label{eq:D2-auxbound}
a^2s^{- 2H_{t_{k-1}}} \varphi\left(\frac{a}{s^{H_{t_{k-1}}}}\right) \le C\varphi(a),
\end{equation}
we see that
\begin{align*}
\DD_{21} &\le C \varphi(a) n^{-\alpha} \log n \sum_{k=2}^n  \int_{t_{k-1}}^{t_k} s^{-1} ds
= C \varphi(a) n^{-\alpha} \log n \int_{t_{1}}^{1} s^{-1} ds
\\
&= C \varphi(a) n^{-\alpha} \log^2 n .
\end{align*}

(ii) Case $\abs{a} > 1$.
We have
\begin{align*}
\DD_{21} &\le C n^{-\alpha} \log n \sum_{k=2}^n a^2\int_{t_{k-1}}^{t_k} s^{- 2H_{\max}-1} \varphi\left(\frac{a}{s^{H_{\min}}}\right) ds
\\
&\le C n^{-\alpha} \log n \, a^2 \int_{0}^{1} s^{- 2H_{\max}-1} \varphi\left(\frac{a}{s^{H_{\min}}}\right) ds
\le C \varphi(a) n^{-\alpha} \log n,
\end{align*}
where the last inequality follows from Lemma~\ref{l:integral}.

Thus, in both cases we have the following upper bound
\[
\DD_{21} \le C \varphi(a) n^{-\alpha} \log^2 n.
\]
For any $\delta>0$, $\log n \le C n^\delta$ for some $C=C(\delta)$.
Therefore choosing $\delta <\frac12(1-\alpha)$, we finally get
\begin{equation}\label{eq:D21-bound}
\DD_{21} \le C \varphi(a) n^{-\alpha + 2\delta} 
\le C \varphi(a) n^{1-2\alpha}.
\end{equation}

Let us consider $\DD_{22}$.
By the mean value theorem,
\[
t_k^{H_{t_{k-1}}} - s^{H_{t_{k-1}}} \le H_{t_{k-1}} s^{H_{t_{k-1}}-1} (t_k - s)
\le s^{H_{\min}-1} \frac1n.
\]
Therefore,
\[
\DD_{22} \le \frac{a^2}{n}\sum_{k=2}^n \int_{t_{k-1}}^{t_k} s^{- 2H_{t_{k-1}}+ H_{\min}-2} \varphi\left(\frac{a}{s^{H_{t_{k-1}}}}\right) ds.
\]
Again, let us consider two cases.

(i) Case $\abs{a} \le 1$.
Applying the bound \eqref{eq:D2-auxbound}, we obtain
\begin{align*}
\DD_{22} &\le  C \varphi(a) n^{-1}\sum_{k=2}^n \int_{t_{k-1}}^{t_k} s^{H_{\min}-2} ds
= C \varphi(a) n^{-1} \int_{n^{-1}}^{1} s^{H_{\min}-2} ds
\\
&= C \varphi(a) n^{-1} \, \frac{n^{1-H_{\min}} - 1}{1-H_{\min}}
\le C \varphi(a) n^{-H_{\min}}.
\end{align*}

(ii) Case $\abs{a} > 1$.
\begin{align*}
\DD_{22} &\le \frac{a^2}{n}\sum_{k=2}^n \int_{t_{k-1}}^{t_k} s^{- 2H_{\max}+ H_{\min}-2} \varphi\left(\frac{a}{s^{H_{\min}}}\right) ds
\\
&\le \frac{a^2}{n}\int_{0}^{1} s^{- 2H_{\max}+ H_{\min}-2} \varphi\left(\frac{a}{s^{H_{\min}}}\right) ds 
\le C \varphi(a) n^{-1}
\le C \varphi(a) n^{-H_{\min}}.
\end{align*}
Hence, in both cases
$\DD_{22} \le C \varphi(a) n^{-H_{\min}}$.
Combining this bound with representation \eqref{eq:D21+D22} and inequality \eqref{eq:D21-bound}, we get 
\begin{equation}\label{eq:D2-bound}
\DD_{2} \le C \varphi(a) n^{-\min\{H_{\min}, 2\alpha-1\}}.
\end{equation}

Now the proof follows from \eqref{eq:D-decomp}, \eqref{eq:D0-bound}, \eqref{eq:D1-bound}, and \eqref{eq:D2-bound}.
\end{proof}

\subsection{Proof of Theorem~\ref{th:main}}
The proof of the main result will be done in two steps.
We start by considering the case $\Psi(x) = (x - a)^+$.
Then the general case will be reduced to that case by application of the following lemma, proved in \cite{Tommi24}.

\begin{lemma}[{\cite[Lemma 4.1]{Tommi24}}]
\label{l:lemma4.1}
Let $\Psi$ be convex and $\psi=\Psi'_{-}$ be its left-sided derivative. Then, for any $x, y \in \real$ we have
\begin{align*}
\Psi(x)-\Psi(y)-\psi(y)(x-y) 
&= \int_{\real}[|x-a|-|y-a|-\sgn(y-a)(x-y)] \mu(d a)
\\
&= 2 \int_{\real}\left[(x-a)^{+} - (y-a)^{+} - \ind_{y>a} (x-y)\right] \mu(d a) 
\\
& \geq 0.
\end{align*}
\end{lemma}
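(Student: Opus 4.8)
I would establish the three assertions in turn; the first equality carries the real content, and the other two follow quickly. Start with the second equality and the nonnegativity. The second equality is a pointwise identity integrated against $\mu$: from $\abs{u}=2u^{+}-u$ and $\sgn(u)=2\ind_{u>0}-1$, valid for every $u\in\real$, substitute $u=x-a$, $u=y-a$, and $u=y-a$ into $\abs{x-a}-\abs{y-a}-\sgn(y-a)(x-y)$ and simplify; the terms affine in $x-y$ cancel, leaving
\[
\abs{x-a}-\abs{y-a}-\sgn(y-a)(x-y)=2\bigl[(x-a)^{+}-(y-a)^{+}-\ind_{y>a}(x-y)\bigr]
\]
for every $a\in\real$, and integrating this against $\mu$ gives the claim. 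The inequality $\ge0$ then follows from convexity: for each fixed $a$ the expression $\abs{x-a}-\abs{y-a}-\sgn(y-a)(x-y)$ is the nonnegative gap in the subgradient inequality for the convex function $t\mapsto\abs{t-a}$ at $y$ (with $\sgn(y-a)$ a subgradient), and $\mu$ is nonnegative; equivalently, the common value equals $\Psi(x)-\Psi(y)-\psi(y)(x-y)\ge0$ since $\psi(y)=\Psi'_{-}(y)\in\partial\Psi(y)$.

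For the first equality I would invoke the classical representation of a convex function through its second-derivative measure. Here $\psi=\Psi'_{-}$ is nondecreasing and left-continuous, and $\mu$ is the measure from the statement, normalized so that $\psi(d)-\psi(c)=2\mu\bigl([c,d)\bigr)$ for all $c\le d$ (equivalently, $\Psi''=2\mu$ in the distributional sense). Take $x>y$. By the fundamental theorem of calculus for convex functions, $\Psi(x)-\Psi(y)=\int_{y}^{x}\psi(t)\,dt$, so
\[
\Psi(x)-\Psi(y)-\psi(y)(x-y)=\int_{y}^{x}\bigl(\psi(t)-\psi(y)\bigr)\,dt=2\int_{y}^{x}\mu\bigl([y,t)\bigr)\,dt.
\]
Writing $\mu([y,t))=\int_{\real}\ind_{\{y\le a<t\}}\,\mu(da)$ and applying Tonelli's theorem to interchange the integrals, the inner integral over $t$ evaluates to $(x-a)^{+}\ind_{\{a\ge y\}}$, so the expression becomes $2\int_{\real}(x-a)^{+}\ind_{\{a\ge y\}}\,\mu(da)$. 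A short case check over $a<y$, $y\le a<x$, and $a\ge x$ shows that for $x>y$ one has $(x-a)^{+}\ind_{\{a\ge y\}}=(x-a)^{+}-(y-a)^{+}-\ind_{y>a}(x-y)$ pointwise, so the expression equals $2\int_{\real}[(x-a)^{+}-(y-a)^{+}-\ind_{y>a}(x-y)]\,\mu(da)$; together with the algebraic identity above, this is the desired formula. The case $x<y$ is symmetric — use $\Psi(y)-\Psi(x)=\int_{x}^{y}\psi(t)\,dt$ and repeat the computation with the endpoints interchanged, the inner integral now producing $(a-x)^{+}\ind_{\{a<y\}}$, which again coincides with $(x-a)^{+}-(y-a)^{+}-\ind_{y>a}(x-y)$ when $x<y$ — and $x=y$ is trivial.

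The argument has no serious obstacle; the only care required is in the measure-theoretic bookkeeping. One must justify the fundamental theorem of calculus for convex functions, which holds because a convex function on $\real$ is locally Lipschitz, hence locally absolutely continuous, with a.e.\ derivative $\psi$; and one must be consistent about left- versus right-continuity and about whether the relevant Lebesgue--Stieltjes intervals are open or half-open, but these conventions affect only $\mu$-null sets and hence leave the integrals unchanged. One should also note that the integral is finite: the integrand $\abs{x-a}-\abs{y-a}-\sgn(y-a)(x-y)$ vanishes unless $a$ lies between $x$ and $y$, where it is bounded by $2\abs{x-y}$, so its $\mu$-integral is at most $2\abs{x-y}\,\mu\bigl([x\wedge y,\,x\vee y]\bigr)<\infty$ because $\mu$ is a Radon measure on $\real$.
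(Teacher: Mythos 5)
Your proof is correct, but it is not the paper's route at all: the paper does not prove this lemma, it simply imports it as \cite[Lemma~4.1]{Tommi24}, so you have supplied the self-contained argument that the paper delegates to the reference. Your argument is the standard one and it works: the second equality is the pointwise identity obtained from $\abs{u}=2u^{+}-u$ and $\sgn(u)=2\ind_{u>0}-1$ (with the left-continuous convention $\sgn(0)=-1$, which is the one consistent with $\psi=\Psi'_{-}$ and with $\ind_{y>a}$); the nonnegativity is the subgradient inequality for $t\mapsto\abs{t-a}$ integrated against the nonnegative $\mu$; and the first equality follows from $\Psi(x)-\Psi(y)=\int_y^x\psi(t)\,dt$, the normalization $\psi(d)-\psi(c)=2\mu([c,d))$, Tonelli, and the case check showing the inner $dt$-integral equals $(x-a)^{+}-(y-a)^{+}-\ind_{y>a}(x-y)$. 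What this buys is transparency about the normalization of $\mu$ (namely $\Psi''=2\mu$), which the paper leaves implicit in the phrase ``the measure associated with the second derivative of $\Psi$'' and which is exactly what makes the factor $2$ in the lemma come out right. One small imprecision: your closing remark that the choice of half-open versus open Lebesgue--Stieltjes intervals ``affects only $\mu$-null sets'' is not literally true when $\mu$ has an atom at $a=y$; what saves the argument is that the left-continuity of $\psi$ forces the half-open interval $[y,t)$, which produces $\ind_{a\ge y}$ and matches $\ind_{y>a}=0$ at $a=y$ on the other side — your computation already uses this correct convention, so nothing breaks, but the justification should appeal to consistency of conventions rather than to $\mu$-negligibility.
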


\subsubsection{Case \texorpdfstring{$\Psi(x) = (x - a)^+$}{Psi(x) = (x-a)+}}

\begin{proposition}\label{prop:key}
Let $X$ be a multifractional Brownian motion with the Hurst function $H_t$ satisfying \ref{A1}--\ref{A2}.
Let $\widetilde H \in (\frac12, H_{\min}] \cap (\frac12, \alpha)$. Then for any $a \in \real$,
\begin{equation}\label{eq:key-bound}
\begin{split}
\MoveEqLeft
\ex \abs{\int_0^1 \ind_{X_s > a}\, dX_s - \sum_{k=1}^n \ind_{X_{t_{k-1}} > a} \left(X_{t_{k}} - X_{t_{k-1}}\right)} 
\\*
&\le \frac12 \int_0^1 s^{-H_s} \varphi\left(a s^{-H_s}\right) ds
\left(\frac1n\right)^{2\widetilde H-1} + R_n(a),
\end{split}
\end{equation}
where the remainder satisfies
\begin{equation}\label{eq:remainder}
R_n(a) \le C \varphi(a)\, n^{-\min\{2\widetilde H - H_{\max}, H_{\min}+\alpha - 1, 2\alpha - 1\}}.
\end{equation}
\end{proposition}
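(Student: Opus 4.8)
The plan is to reduce the left-hand side of \eqref{eq:key-bound} to the expectation of a nonnegative random variable, evaluate that expectation exactly by elementary Gaussian identities, and then identify the leading term while absorbing the rest into $R_n(a)$ via Lemmas~\ref{l:I1A}, \ref{l:I1B} and~\ref{l:I23}. First I would apply the chain rule \eqref{eq:chain} to $\Psi(x)=(x-a)^+$ (left derivative $\ind_{x>a}$, associated measure $\delta_a$, so that $\int_\real\varphi(b)\,\delta_a(db)=\varphi(a)<\infty$), obtaining $\int_0^1\ind_{X_s>a}\,dX_s=(X_1-a)^+-(X_0-a)^+$ a.s. Telescoping along $t_0,\dots,t_n$ rewrites the approximation error as $\sum_{k=1}^n\bigl[(X_{t_k}-a)^+-(X_{t_{k-1}}-a)^+-\ind_{X_{t_{k-1}}>a}(X_{t_k}-X_{t_{k-1}})\bigr]$, and each summand is nonnegative pathwise by convexity of $(\cdot-a)^+$ (cf.\ Lemma~\ref{l:lemma4.1} with $\mu=\delta_a$); hence the $L^1$-norm equals $\sum_{k=1}^n g_k(a)$, where $g_k(a)$ is the expectation of the $k$-th bracket.

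Next I would evaluate each $g_k(a)$. Set $v_k\coloneqq t_k^{H_{t_k}}=\sqrt{V(t_k)}$ and $f(v)\coloneqq\ex[(vY-a)^+]=v\varphi(a/v)-a\prob(Y>a/v)$, so $f'(v)=\varphi(a/v)$. Since $X_0=0$, the term $g_1(a)=\ex[(X_{t_1}-a)^+]-(-a)^+$ lies in $[0,v_1\varphi(a/v_1)]\subset[0,n^{-H_{\min}}\varphi(a)]$, using $b\,\prob(Y>b)\le\varphi(b)$ and $v_1\le1$. For $k\ge2$ the pair $(X_{t_{k-1}},X_{t_k}-X_{t_{k-1}})$ is centered Gaussian with $\Cov(X_{t_{k-1}},X_{t_k}-X_{t_{k-1}})=\tfrac12(v_k^2-v_{k-1}^2-\vartheta(t_{k-1},t_k))$, so
\[
g_k(a)=f(v_k)-f(v_{k-1})-\frac{v_k^2-v_{k-1}^2-\vartheta(t_{k-1},t_k)}{2v_{k-1}}\,\varphi\!\Bigl(\tfrac{a}{v_{k-1}}\Bigr)
=(\mathrm I)_k-(\mathrm{II})_k+(\mathrm{III})_k,
\]
where, after writing $v_k^2-v_{k-1}^2=(v_k-v_{k-1})(v_k+v_{k-1})$,
\[
(\mathrm I)_k=f(v_k)-f(v_{k-1})-(v_k-v_{k-1})\varphi(\tfrac{a}{v_{k-1}}),\quad
(\mathrm{II})_k=\frac{(v_k-v_{k-1})^2}{2v_{k-1}}\varphi(\tfrac{a}{v_{k-1}}),\quad
(\mathrm{III})_k=\frac{\vartheta(t_{k-1},t_k)}{2v_{k-1}}\varphi(\tfrac{a}{v_{k-1}}).
\]
A short computation shows $(\mathrm I)_k=v_k[\varphi(\tfrac{a}{v_k})-\varphi(\tfrac{a}{v_{k-1}})]-a[\prob(Y>\tfrac{a}{v_k})-\prob(Y>\tfrac{a}{v_{k-1}})]$, i.e.\ precisely the summand of Lemma~\ref{l:I23}, and $(\mathrm{II})_k$ is half the summand of Lemma~\ref{l:I1A}.

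Then I would sum over $k\ge2$: Lemmas~\ref{l:I23} and~\ref{l:I1A} bound $\bigl|\sum(\mathrm I)_k\bigr|$ and $\sum(\mathrm{II})_k$ by $C\varphi(a)n^{-\min\{H_{\min},2\alpha-1\}}$. For $\sum(\mathrm{III})_k$ I would insert Lemma~\ref{l:mfBm}$(ii)$, $\vartheta(t_{k-1},t_k)\le n^{-2H_{\min}}+Cn^{-H_{\min}-\alpha}+Cn^{-2\alpha}$, bound $n^{-2H_{\min}}\le n^{-2\widetilde H}$ (valid since $\widetilde H\le H_{\min}$), and use Lemma~\ref{l:I1B} to replace $\tfrac1n\sum_{k=2}^nv_{k-1}^{-1}\varphi(\tfrac{a}{v_{k-1}})$ by $\int_0^1 s^{-H_s}\varphi(as^{-H_s})\,ds$ up to $C\varphi(a)n^{H_{\max}-1}$ (together with $\int_0^1 s^{-H_s}\varphi(as^{-H_s})\,ds\le C\varphi(a)$, a consequence of Lemma~\ref{l:integral}). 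This yields the leading term $\tfrac12\int_0^1 s^{-H_s}\varphi(as^{-H_s})\,ds\,(1/n)^{2\widetilde H-1}$ plus errors of order $\varphi(a)n^{H_{\max}-2\widetilde H}$, $\varphi(a)n^{1-H_{\min}-\alpha}$, $\varphi(a)n^{1-2\alpha}$. Adding $g_1(a)$ and the two previous sums, and using that $\widetilde H\le H_{\min}$ and $\widetilde H<\alpha$ imply $2\widetilde H-H_{\max}\le H_{\min}$ and $\min\{H_{\min},2\alpha-1\}\ge\min\{2\widetilde H-H_{\max},H_{\min}+\alpha-1,2\alpha-1\}$, all errors collapse into $R_n(a)\le C\varphi(a)\,n^{-\min\{2\widetilde H-H_{\max},\,H_{\min}+\alpha-1,\,2\alpha-1\}}$, which is exactly \eqref{eq:key-bound}--\eqref{eq:remainder}.

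The hard analytic work — estimating sums of increments of a process with non-monotone variance — has already been isolated in Lemmas~\ref{l:I1A}, \ref{l:I1B} and~\ref{l:I23}, so within this proposition the main obstacle is organizational: choosing the decomposition $g_k=(\mathrm I)_k-(\mathrm{II})_k+(\mathrm{III})_k$ so that each piece is matched by exactly one auxiliary lemma, and then tracking the competing exponents $2\widetilde H-H_{\max}$, $H_{\min}+\alpha-1$, $2\alpha-1$, $H_{\min}$ to confirm that the remainder is of strictly smaller order than $n^{1-2\widetilde H}$ — in particular when $\widetilde H<H_{\min}$, which is forced once $\alpha\le H_{\min}$.
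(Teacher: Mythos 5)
Your proposal is correct and follows essentially the same route as the paper: after the chain rule and the nonnegativity observation, your split $g_k=(\mathrm I)_k-(\mathrm{II})_k+(\mathrm{III})_k$ coincides term by term with the paper's $I_{2,n}+I_{3,n}$, $I_{1,A,n}$ and $I_{1,B,n}$ (you merely derive the covariance identity directly instead of citing Lemma~4.4 of \cite{Tommi24}), and the subsequent application of Lemmas~\ref{l:I1A}, \ref{l:I1B}, \ref{l:I23} and the exponent bookkeeping match the paper's argument. The separate treatment of the $k=1$ term via $g_1(a)\le n^{-H_{\min}}\varphi(a)$ is also equivalent to the paper's $\DD_0$ estimate.
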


\begin{proof}
The proof follows the scheme from \cite[Prop.~4.11]{Tommi24}.
By the chain rule \eqref{eq:chain}, we obtain
\begin{align*}
\int_0^1 \ind_{X_s > a} dX_s 
&= (X_1 - a)^+ - (X_0 - a)^+
\\
&= \sum_{k=1}^n \left[\left(X_{t_k} - a\right)^+ - \left(X_{t_{k-1}} - a\right)^+\right].
\end{align*}
Therefore,
\begin{equation}
\label{eq:int-sum}
\begin{split}
\MoveEqLeft
\int_0^1 \ind_{X_s > a}\, dX_s - \sum_{k=1}^n \ind_{X_{t_{k-1}} > a} \left(X_{t_{k}} - X_{t_{k-1}}\right)
\\
&=\sum_{k=1}^n \left[\left(X_{t_k} - a\right)^+ - \left(X_{t_{k-1}} - a\right)^+ - \ind_{X_{t_{k-1}} > a} \left(X_{t_{k}} - X_{t_{k-1}}\right)\right]
\\
&\ge 0,
\end{split}
\end{equation}
where the last inequality follows from Lemma~\ref{l:lemma4.1}.
Further, from
$(x - a)^+ = x\ind_{x>a} - a\ind_{x>a}$, 
we obtain the following representation for one interval increment:
\begin{multline}\label{eq:incr}
\left(X_{t_k} - a\right)^+ - \left(X_{t_{k-1}} - a\right)^+ - \ind_{X_{t_{k-1}} > a} \left(X_{t_{k}} - X_{t_{k-1}}\right)
\\
= X_{t_k} \ind_{X_{t_k}>a} - X_{t_k} \ind_{X_{t_{k-1}}>a} - a \ind_{X_{t_k}>a} + a \ind_{X_{t_{k-1}}>a}.
\end{multline}
Evidently, we have from \eqref{eq:phi} for $k = 1,\dots,n$
\begin{equation}\label{eq:ex1}
\ex\left[X_{t_k} \ind_{X_{t_k}>a}\right] = \sqrt{V(t_{k})}\varphi\left(\frac{a}{\sqrt{V(t_{k})}}\right)
= t_{k}^{H_{t_k}}\varphi\left(\frac{a}{t_{k}^{H_{t_k}}}\right),
\end{equation}
and
\begin{equation}\label{eq:ex2}
\ex \ind_{X_{t_k}>a} = \prob\left(Y > \frac{a}{t_{k}^{H_{t_k}}}\right),
\end{equation}
where $Y \sim \ND(0,1)$.
Note that the relations \eqref{eq:ex1} and \eqref{eq:ex2} remain valid for $k=0$ under the convention
$\varphi(\pm\infty) = 0$,
$\prob(Y > +\infty) = 0$,
$\prob(Y > -\infty) = 1$.

In order to compute 
$\ex[X_{t_k} \ind_{X_{t_{k-1}}>a}]$,
we denote
\[
\gamma_1 = 0, \quad
\gamma_k = \frac{\Cov\left(X_{t_k}, X_{t_{k-1}}\right)}{\Var X_{t_{k-1}}},
\; k = 2,\dots, n,
\]
and use the representation
\[
X_{t_k} = \gamma_k X_{t_{k-1}} + b_k Y_k,
\]
where $Y_k \sim \ND(0,1)$ is independent of $X_{t_{k-1}}$ and $b_k$ is a normalizing constant (that is, $b_k^2 = \Var X_{t_k} - \gamma_k^2 \Var X_{t_{k-1}}$).
Then we get
\begin{equation}\label{eq:ex3}
\ex\left[X_{t_k} \ind_{X_{t_{k-1}}>a}\right]
= \gamma_k \ex\left[X_{t_{k-1}}\ind_{X_{t_{k-1}}>a}\right]
= \gamma_k t_{k-1}^{H_{t_{k-1}}}\varphi\left(\frac{a}{t_{k-1}^{H_{t_{k-1}}}}\right).
\end{equation}
Combining \eqref{eq:incr}--\eqref{eq:ex3} and rearranging terms, we obtain
\begin{align*}
\MoveEqLeft[0]
\ex \left[ \left(X_{t_k} - a\right)^+ - \left(X_{t_{k-1}} - a\right)^+ - \ind_{X_{t_{k-1}} > a} \left(X_{t_{k}} - X_{t_{k-1}}\right) \right]
\\
&= t_{k}^{H_{t_k}}\varphi\left(\frac{a}{t_{k}^{H_{t_k}}}\right)
- \gamma_k t_{k-1}^{H_{t_{k-1}}}\varphi\left(\frac{a}{t_{k-1}^{H_{t_{k-1}}}}\right) 
+ a \prob\left(Y > \frac{a}{t_{k-1}^{H_{t_{k-1}}}}\right)
- a \prob\left(Y > \frac{a}{t_{k}^{H_{t_k}}}\right)
\\
&= \left[t_{k}^{H_{t_k}} - \gamma_k t_{k-1}^{H_{t_{k-1}}}\right]
\varphi\left(\frac{a}{t_{k-1}^{H_{t_{k-1}}}}\right) 
+ t_{k}^{H_{t_k}} \left[ \varphi\left(\frac{a}{t_{k}^{H_{t_k}}}\right)
- \varphi\left(\frac{a}{t_{k-1}^{H_{t_{k-1}}}}\right) \right]
\\
&\quad+ a \prob\left(Y > \frac{a}{t_{k-1}^{H_{t_{k-1}}}}\right)
- a \prob\left(Y > \frac{a}{t_{k}^{H_{t_k}}}\right).
\end{align*}

Therefore
\[
\ex \abs{\int_0^1 \ind_{X_s > a}\, dX_s - \sum_{k=1}^n \ind_{X_{t_{k-1}} > a} \left(X_{t_{k}} - X_{t_{k-1}}\right)} 
= I_{1,n} + I_{2,n} + I_{3,n},
\]
where
\begin{align*}
I_{1,n} &= \sum_{k=2}^n \left[t_k^{H_{t_k}} - \gamma_k t_{k-1}^{H_{t_{k-1}}}\right] \varphi\left(\frac{a}{t_{k-1}^{H_{t_{k-1}}}}\right),
\\
I_{2,n} &= \sum_{k=1}^n t_k^{H_{t_k}} \left[\varphi\left(\frac{a}{t_{k}^{H_{t_{k}}}}\right)-\varphi\left(\frac{a}{t_{k-1}^{H_{t_{k-1}}}}\right)\right],
\\
I_{3,n}  &= \sum_{k=1}^n  \left[a \prob\left(Y>\frac{a}{t_{k-1}^{H_{t_{k-1}}}}\right) - a\prob\left(Y>\frac{a}{t_k^{H_{t_k}}}\right)\right].
\end{align*}

Applying \cite[Lemma 4.4]{Tommi24} we may write
\[
I_{1,n} = I_{1,A,n} + I_{1,B,n},
\]
where
\begin{align*}
I_{1,A,n} &= - \sum_{k=2}^n \frac{\left(t_k^{H_{t_k}} - t_{k-1}^{H_{t_{k-1}}}\right)^2}{2 t_{k-1}^{H_{t_{k-1}}}} \varphi\left(\frac{a}{t_{k-1}^{H_{t_{k-1}}}}\right),
\\
I_{1,B,n} &= \sum_{k=2}^n \frac{\vartheta(t_k,t_{k-1})}{2 t_{k-1}^{H_{t_{k-1}}}} \varphi\left(\frac{a}{t_{k-1}^{H_{t_{k-1}}}}\right).
\end{align*}
By Lemma \ref{l:I1A},
\[
\abs{I_{1,A,n}}
\le C \varphi(a) n^{-\min\{H_{\min}, 2\alpha-1\}}.
\]
Observing that $H_{\min} > 2H_{\min} - H_{\max}\ge 2\widetilde H - H_{\max}$, we obtain
\[
\abs{I_{1,A,n}}
\le C \varphi(a) n^{-\min\{2\widetilde H-H_{\max}, 2\alpha-1\}}.
\]
Let us consider $I_{1,B,n}$.
Applying Lemma~\ref{l:mfBm} we get
\begin{equation}\label{eq:theta-bound}
\vartheta(t_k,t_{k-1})\le n^{-2H_{\min}} + C n^{-H_{\min}- \alpha} + C n^{-2\alpha}
\le n^{-2\widetilde H} + C n^{-\min\{H_{\min}+ \alpha, 2\alpha\}}.
\end{equation}
Moreover, by Lemma \ref{l:I1B}
\[
\frac1n\sum_{k=2}^n t_{k-1}^{-H_{t_{k-1}}} \varphi\left(\frac{a}{t_{k-1}^{H_{t_{k-1}}}}\right)
= \int_0^1 s^{-H_s} \varphi\left(\frac{a}{s^{H_s}}\right) ds
+ R_{2,B,n},
\]
where 
$R_{2,B,n}\le C\varphi(a) n^{H_{\max}-1}$.
Hence,
\begin{align*}
I_{1,B,n} &\le \Bigl( n^{-2\widetilde H} + C n^{-\min\{H_{\min}+ \alpha, 2\alpha\}}\Bigr)\sum_{k=2}^n \frac{1}{2 t_{k-1}^{H_{t_{k-1}}}} \varphi\left(\frac{a}{t_{k-1}^{H_{t_{k-1}}}}\right)
\\
&= \frac12 \Bigl( n^{1-2\widetilde H} + C n^{1-\min\{H_{\min}+ \alpha, 2\alpha\}}\Bigr) \left(\int_0^1 s^{-H_s} \varphi\left(\frac{a}{s^{H_s}}\right) ds
+ R_{2,B,n}\right)
\\
&= \frac12 n^{1-2\widetilde H}\int_0^1 s^{-H_s} \varphi\left(\frac{a}{s^{H_s}}\right) ds
+ R_{2,B,n}' + R_{2,B,n}'',
\end{align*}
where
\begin{align*}
R_{2,B,n}' &= C n^{1-\min\{H_{\min}+ \alpha, 2\alpha\}} \int_0^1 s^{-H_s} \varphi\left(\frac{a}{s^{H_s}}\right) ds\\
&\le C n^{1-\min\{H_{\min}+ \alpha, 2\alpha\}} \varphi(a) \int_0^1 s^{-H_{\max}} ds
\le C \varphi(a) n^{-\min\{H_{\min}+ \alpha-1, 2\alpha-1\}}
\end{align*}
and
\begin{align*}
R_{2,B,n}'' &= \Bigl(n^{1-2\widetilde H} + C n^{1-\min\{H_{\min}+ \alpha, 2\alpha\}}\Bigr) R_{2,B,n}
\le C \varphi(a) n^{ H_{\max} - \min\{2\widetilde H, H_{\min}+ \alpha, 2\alpha\}}
\\
&\le C \varphi(a) n^{- \min\{2\widetilde H - H_{\max}, H_{\min}+ \alpha - 1, 2\alpha - 1\}}.
\end{align*}

According to Lemma~\ref{l:I23},
\[
\abs{I_{2,n}+I_{3,n}} \le C\varphi(a) n^{-\min\{H_{\min}, 2\alpha-1\}}
\le C \varphi(a) n^{-\min\{2 \widetilde H - H_{\max}, 2\alpha-1\}}.
\]
Combining all above estimates we conclude the proof.
\end{proof}

\subsubsection{Proof of Theorem~\ref{th:main}}

Using Lemma~\ref{l:lemma4.1} and \eqref{eq:chain}, we have
\begin{align*}
\MoveEqLeft
\Psi\left(X_{1}\right)-\Psi\left(X_{0}\right)-\sum_{k=1}^{n} \Psi'\left(X_{t_{k-1}}\right)\left(X_{t_{k}}-X_{t_{k-1}}\right) 
\\
& =\sum_{k=1}^{n}\left[\Psi\left(X_{t_{k}}\right)-\Psi\left(X_{t_{k-1}}\right)-\Psi'\left(X_{t_{k-1}}\right)\left(X_{t_{k}}-X_{t_{k-1}}\right)\right] 
\\
& = 2\int_{\real} Z_{n}^{+}(a) \mu(d a)
\end{align*}
where
\begin{align*}
Z_{n}^{+}(a) & =\sum_{k=1}^{n}\left[\left(X_{t_{k}}-a\right)^{+}-\left(X_{t_{k-1}}-a\right)^{+} - \ind_{X_{t_{k-1}}>a}\left(X_{t_{k}}-X_{t_{k-1}}\right)\right]
\\
& =\int_{0}^{1} \ind_{X_{s}>a} d X_{s}-\sum_{k=1}^{n} \ind_{X_{t_{k-1}}>a}\left(X_{t_{k}}-X_{t_{k-1}}\right)
\\
& \geq 0,
\end{align*}
see \eqref{eq:int-sum}.
Taking expectation and using Proposition~\ref{prop:key} to compute $\ex Z_{n}^{+}(a)$, we get
\begin{align*}
\MoveEqLeft
\ex\left|\Psi\left(X_{1}\right) - \Psi\left(X_{0}\right)
- \sum_{k=1}^{n} \Psi'\left(X_{t_{k-1}}\right) \left(X_{t_{k}} - X_{t_{k-1}}\right)\right|
\\
& = 2 \int_{\real} \ex Z_{n}^{+}(a) \mu(d a) \\
& = \int_{\real}\int_0^1 s^{-H_s} \varphi\left(a s^{-H_s}\right) ds
\,\mu(d a) \left(\frac1n\right)^{2H_{\min}-1}
+ 2 \int_{\mathbb{R}} R_{n}(a)\, \mu(d a).
\end{align*}
Here, the remainder $R_{n}(a)$ is defined in Proposition~\ref{prop:key} and satisfies
\[
R_n(a) \le C \varphi(a)\, n^{-\min\{2\widetilde H - H_{\max}, H_{\min}+\alpha - 1, 2\alpha - 1\}}.
\]
which is integrable since $\int_{\mathbb{R}} \varphi(a) \mu(d a)<\infty$ by assumption. Similarly, the leading order term is finite by the fact that
$$
\int_0^1 s^{-H_s} \varphi\left(a s^{-H_s}\right) ds
\le \varphi(a) \int_{0}^{1} s^{-H_{\max}} \,ds 
\le C \varphi(a).
$$
This completes the proof.
\qed

\backmatter

%
%
%

\bmhead{Acknowledgement}
Kostiantyn Ralchenko gratefully acknowledges support from the Research Council of Finland, decision number 359815.


\end{document}